\documentclass{IEEEtran}

\usepackage{amsmath,amssymb,amsthm}
\usepackage{algorithm}
\usepackage{algpseudocode}
\usepackage{array}
\usepackage[c2,nocomma]{optidef}
\usepackage[caption=false,font=normalsize,labelfont=sf,textfont=sf]{subfig}
\usepackage{textcomp}
\usepackage{stfloats}
\usepackage{url}
\usepackage{verbatim}
\usepackage{graphicx}
\usepackage{nicematrix}
\usepackage{tikz}
\usepackage{color}
\usepackage{bm}
\usepackage{booktabs}
\usepackage[colorlinks,linkcolor=blue,citecolor=blue]{hyperref}
\usepackage[capitalize,nameinlink]{cleveref}

\theoremstyle{plain}
\newtheorem{theorem}{Theorem}
\newtheorem{lemma}{Lemma}
\newtheorem{proposition}{Proposition}

\theoremstyle{definition}
\newtheorem{assumption}{Assumption}

\newtheorem{problem}{Problem}

\newtheorem{remark}{Remark}

\crefname{assumption}{Assumption}{Assumptions}

\newcommand{\arxiv}[2]{#1}
\newcommand{\esr}{\rho}

\def\BibTeX{{\rm B\kern-.05em{\sc i\kern-.025em b}\kern-.08em
    T\kern-.1667em\lower.7ex\hbox{E}\kern-.125emX}}

\begin{document}

\title{Adaptive Stepsizes With Certified Convergence in Distributed Gradient Tracking With Quadratic Costs}
\author{Yifan~Wang,~\IEEEmembership{Student~Member,~IEEE},
Luca~Ballotta,~\IEEEmembership{Member,~IEEE},
Ruggero~Carli,~\IEEEmembership{Member,~IEEE},
Andrea~Iannelli,~\IEEEmembership{Member,~IEEE},
Xianghui~Cao,~\IEEEmembership{Senior~Member,~IEEE},
and~Luca~Schenato,~\IEEEmembership{Fellow,~IEEE}
\thanks{This work was supported in part by the Frontier Technologies R\&D Program of Jiangsu Province of China Grant BF2024065, in part by the Shenzhen Science and Technology Program of China Grant JCYJ20230807114609019.}
\thanks{Y. Wang is with the School of Automation, Southeast University, Nanjing, 210096 China, and also with the
Department of Information Engineering, University of Padova, Padova 35131, Italy (e-mail: evan@seu.edu.cn).}
\thanks{X. Cao is with the School of Automation, Southeast University, Nanjing, 210096 China (e-mail:xhcao@seu.edu.cn).}
\thanks{A. Iannelli is with the Institute for System Theory and
Automatic Control, University of Stuttgart, Germany (e-mail: andrea.iannelli@ist.uni-stuttgart.de).}
\thanks{L. Ballotta, R. Carli, and L. Schenato are with 
the Department of Information Engineering, University of Padova, Padova 35131, Italy (e-mail: \{luca.ballotta, ruggero.carli, l.schenato\}@unipd.it).}}

\maketitle

\begin{abstract}
In this work, we propose an adaptive stepsize rule with guaranteed convergence for Distributed Gradient Tracking applied to scalar quadratic problems with heterogeneous curvatures. Most distributed gradient-based algorithms require a suitable stepsize selection. Available theoretical bounds are often overly conservative, while practical implementations typically rely on empirically tuned heuristics. Online adaptive strategies have only recently emerged for general distributed convex optimization, but their properties and performance remain only partially understood.
To gain analytical insight, we focus on the informative setting of scalar quadratic costs, which allows us to explicitly capture the interplay between network topology and curvature heterogeneity. We derive a convergence bound parameterized only by the essential spectral radius of the consensus matrix and the heterogeneity of the local cost curvatures, both computable online without any prior knowledge of the optimization problem. Optimizing this bound yields a computationally tractable surrogate for the convergence rate and the optimal constant stepsize. The resulting stepsize admits an analytical interpretation, guarantees convergence for arbitrary network topologies and curvature heterogeneity, and is provably tight for complete graphs and homogeneous curvatures. Finally, extensive numerical simulations demonstrate that the proposed distributed adaptive strategy significantly outperforms existing offline and online stepsize selection rules in the considered setting.

\begin{IEEEkeywords}
Distributed gradient tracking, curvature dispersion, certified radius, adaptive uncoordinated stepsizes.
\end{IEEEkeywords}
\end{abstract}

\section{Introduction}
\label{sec:introduction}
Distributed optimization has raised great attention in the past few decades due to its wide applicability, e.g., in multi-robot systems and sensor networks~\cite{TRO_2025_robot_backgrd,TAC_2024_sensor_net_backgrd}.
Standard distributed optimization algorithms include distributed (sub)gradient descent (DGD)~\cite{TAC_Nedic_2009,TAC_Nedic_2010}, distributed gradient tracking (DGT)~\cite{DiLorenzo2016NEXT,Qu_TCNS_2018,pu2021_Mathprogram_DGT}, and the alternating direction method of multipliers (ADMM)~\cite{Nicola_Robust_learning_TAC_2025}.
Specifically, the main drawback of DGD is that exact convergence is impossible unless a diminishing stepsize, which slows down convergence, is used. DGT achieves exact convergence with a sufficiently small stepsize using lightweight local updates~\cite{Ivano_TAC_2023_GT_is_integral,ST_GT_arxiv_2025}, but the design of the stepsize is often based on empirically tuned heuristics. ADMM guarantees robust exact convergence, but requires solving nested subproblems. 

Elaborating further, existing convergence analysis of DGT and stepsize design with formal guarantees of convergence are mostly conservative because they are based on global smoothness and strong convexity constants, i.e., worst-case curvature information~\cite{Qu_TCNS_2018,pu2021_Mathprogram_DGT,Ivano_TAC_2023_GT_is_integral,ST_GT_arxiv_2025}.
Indeed, empirical studies show that stepsizes based on Lipschitz constant estimation, either offline or online, yield slow convergence,
while heuristic tuning of larger stepsizes typically lacks formal guarantees. 
A related line of work on federated and distributed learning shows that second-order similarity provides structural information beyond worst-case curvature quantities.
For example, Network-DANE introduces a homogeneity parameter measuring the deviation of local Hessians from the global Hessian, and its convergence analysis shows that data homogeneity, network connectivity, and local averaging jointly affect the convergence rate~\cite{DANE_2020}.
Follow-up research further exploited Hessian homogeneity or heterogeneity to improve communication-computation 
complexity~\cite{AIST_2022_similarity,SunY_SIAM_2022_GT_surrogation,pmlr_2024_Drift_Correction,Nips_2023_Average_Second_order_Similarity,pmlr_2025_exploiting_similarity,TSP_2025_Effectiveness_Local_updates_data_heterogeneity}.
However, it remains unclear how curvature information, including Hessian homogeneity and heterogeneity, can be used to characterize the convergence of DGT dynamics beyond worst-case Lipschitz-based bounds. 

Another source of conservatism stems from the coupled consensus and gradient tracking dynamics of DGT which, by intertwining the local cost curvatures and the spectrum of the consensus matrix, jointly determine the stabilizing stepsizes. As a result, precisely estimating the speed of convergence of DGT for a given stepsize requires global information of network weights and local costs, which are hardly accessed by individual agents, not flexible to different optimization problems, and provide little intuition for principled design. Existing works establish convergence by deriving contractive error recursions through chains of norm inequalities whose coefficients depend on the network spectral parameter and worst-case curvature quantities, resulting in loose bounds~\cite{Compress_DGT_pushi_TAC_2022}.

\subsubsection*{Contribution}
Motivated by these challenges, we tackle the problem of stepsize selection in DGT targeting formal convergence certification and practically useful convergence speed.
Since the problem is challenging in the general case, we focus on scalar quadratic costs.
This setting is analytically tractable and allows us to explicitly study the coupling between network mixing and local curvature heterogeneity in the convergence of DGT, which we propose as a novel approach for stepsize design.
Our goal is to pursue a coordinated stepsize for fast convergence of DGT, using only succinct local information.
We derive a convergence certificate shaped by two problem-related scalar quantities: curvature dispersion $\eta$ (i.e., the maximum relative deviation of the local curvatures from the average) and the essential spectral radius $\sigma$ of the consensus matrix. Incidentally, these two quantities can be computed online via consensus tracking and max-tracking. Minimizing this certificate leads to a convergence-certified stepsize rule. In particular, the stepsize rule coincides with the optimal constant stepsize choice in both the complete-graph and uniform-curvature regimes for the worst-case root radius minimization problem using only $\sigma$. We further provide a distributed adaptive stepsize rule such that uncoordinated local stepsizes almost surely converge to this certified value.

\textit{Organization:}
Section~\uppercase\expandafter{\romannumeral2} introduces the quadratic optimization setting, formulates the constant-stepsize design problem for DGT, and motivates the proposed approach. Section~\uppercase\expandafter{\romannumeral3} derives the optimal constant stepsize in two analytically tractable regimes and establishes the basis for the convergence-certified design developed in Section~\uppercase\expandafter{\romannumeral4}. Section~\uppercase\expandafter{\romannumeral4} develops a convergence-certified constant-stepsize rule for the general case. Section~\uppercase\expandafter{\romannumeral5} presents a fully distributed adaptive implementation of the proposed design. Section~\uppercase\expandafter{\romannumeral6} reports numerical results, and Section~\uppercase\expandafter{\romannumeral7} concludes the paper.







\section{Setup and Problem Formulation}
\subsection{Quadratic Optimization Setting and Distributed Gradient Tracking Algorithm}
Consider a set of agents $\mathcal{V}=\{1,2,\dots,N\}$ interacting over a graph $\mathcal G=(\mathcal V, \mathcal E)$ with edges $\mathcal E\subseteq\{(i,j):i,j\in\mathcal V,\ i\neq j\}$. The set of neighbors of node $i$ is $\mathcal N_i := \{j\in\mathcal V : (i,j)\in\mathcal E\}$.
Communication weights are
collected in matrix $W=[w_{ij}]\in\mathbb R^{N\times N}$ with $w_{ij}\in(0,1)$ if $(i,j)\in\mathcal{E}$ or $i=j$ and $0$ otherwise.
\begin{assumption}\label{ass:graph_connected}
    Graph $\mathcal G$ is undirected and connected.
    Consensus matrix $W$ is symmetric doubly stochastic.
\end{assumption}
Under \cref{ass:graph_connected}, $W$ has real eigenvalues labeled as $1=\lambda_1>\lambda_2\ge\cdots\ge\lambda_N>-1$. The essential spectral radius of $W$ is defined as $\sigma:=\esr(W-\mathbf 1\mathbf 1^\top/N)=\max\{|\lambda_2|,|\lambda_N|\}<1$,
where $\esr(A)$ is the spectral radius of the matrix $A$.

\begin{assumption}\label{ass:cost_function}
    Each agent $i$ has access to the scalar quadratic cost $f_i(x)=\frac{1}{2}h_i(x-b_i)^2+c_i$ with $x, b_i, c_i\in\mathbb R$ and $h_i>0$.     
\end{assumption}
The goal is to collaboratively minimize the global cost, i.e.,
\begin{equation}
\min_{x\in\mathbb{R}} f(x) \qquad 
f(x):=\frac{1}{N}\sum_{i\in\mathcal V}  \left(\frac{1}{2}h_i(x-b_i)^2+c_i\right).
\label{eq:global_obj}
\end{equation}
A popular algorithm to solve~\eqref{eq:global_obj} is the following DGT. At each step $k$, each agent performs the following  updates
\begin{subequations}\label{eq:dgt_scalar}
\begin{align}
x_{i,k+1}
&=
\sum_{j\in\mathcal N_i\cup\{i\}} w_{ij}\bigl(x_{j,k}-\alpha_j y_{j,k}\bigr),
\label{eq:dgt_scalar_x}
\\
y_{i,k+1}
&=
\sum_{j\in\mathcal N_i\cup\{i\}} w_{ij}y_{j,k}
+
h_i\bigl(x_{i,k+1}-x_{i,k}\bigr).
\label{eq:dgt_scalar_y}
\end{align}
\end{subequations}
Here, $x_{i,k}\in\mathbb{R}$ is agent $i$'s local estimate of the optimal solution $x^\star$ of \eqref{eq:global_obj} and $y_{i,k}\in\mathbb{R}$ is its estimate of the gradient of the global cost, with $
y_{i,0}=h_i x_{i,0}$ and $x_{i,0}\in\mathbb R$.

We next introduce a few convenient notations.
Define the scaled gradient estimate $\tilde y_{i,k}=\alpha_i y_{i,k}$ and the stepsize matrix $\bm \alpha:=\operatorname{diag}(\alpha_1,\ldots,\alpha_N)$.
By stacking $H:=\operatorname{diag}(h_1,\ldots,h_N)$, $x_k:=\operatorname{col}(x_{1,k},\ldots,x_{N,k})$, 
and $\tilde y_k:=\operatorname{col}(\tilde{y}_{1,k},\ldots,\tilde{y}_{N,k})$,
the DGT updates~\eqref{eq:dgt_scalar} can be compactly written as the linear system
\begin{equation}
\hspace{-2pt}\begin{bmatrix}
x_{k+1}\\
\tilde y_{k+1}
\end{bmatrix}
=
\begin{bmatrix}
W & -W\\
\bm\alpha H(W-I) & \bm\alpha W \bm\alpha^{-1} - \bm\alpha H W
\end{bmatrix}
\begin{bmatrix}
x_k\\
\tilde y_k
\end{bmatrix}.
\label{eq:compact_dgt}
\end{equation}
We respectively define average curvature $\bar h$ and average-normalized curvature dispersion (parameter) $\eta$ as
\begin{equation}
    \bar h:=\frac{1}{N}\sum_{i\in\mathcal V}h_i \qquad
    \eta:=\max_{i\in\mathcal V}\frac{|h_i-\bar h|}{\bar h},
\end{equation}
where $\eta$ quantifies the heterogeneity of costs $f_i$ as the maximum relative deviation of local curvatures $h_i$ from the average.

\subsubsection*{High-level design objective}

Choosing the stepsizes $\alpha_i$ while formally ensuring convergence of DGT to the optimizer is nontrivial in practice.
Rather than designing the stepsize directly from the complete problem data, we seek an approach that not only certifies convergence but also yields practically useful convergence speed.
The system dynamics in~\eqref{eq:compact_dgt} jointly depend on the consensus matrix $W$ and the local curvature matrix $H$.
Even if $\alpha_i:=\alpha>0$, curvature heterogeneity prevents the system matrix in~\eqref{eq:compact_dgt} from being diagonalizable, making its eigenvalues unavailable in closed form.
As a result, any attempt to design $\alpha$ directly based on $W$ and $H$ is computationally cumbersome and provides little insight into principled stepsize design.
Instead, we seek a simpler characterization based only on two succinct problem descriptors: the essential spectral radius $\sigma$ and the curvature dispersion~$\eta$.

Informally, let $\rho_{\rm DGT}(\alpha;W,H)$ denote the asymptotic convergence factor of the DGT dynamics.
The ideal constant-stepsize design problem can then be formulated as
\begin{equation}
\!\!\!\alpha^\star(W,H)\!\in\!\arg\min_{\alpha>0}\,
\rho_{\rm DGT}(\alpha;W,H)\quad \mathrm{s.t.}\;
x_{i,k}\!\to\!x^\star,\ \forall i.
\label{eq:prob-high-level-nominal}
\end{equation}

Instead, we seek a computationally tractable surrogate depending only on $(\sigma,\eta)$:
\begin{equation}
\!\!\!\widehat\alpha^\star(\sigma,\eta)\!\in\!\arg\min_{\alpha>0}\,
\widehat\rho_{\rm DGT}(\alpha;\sigma,\eta)\quad \mathrm{s.t.}\;
x_{i,k}\!\to\!x^\star,\ \forall i,
\label{eq:prob-high-level-surrogate}
\end{equation}
with the property 
$\widehat{\rho}_{\rm DGT}(\alpha;\sigma,\eta)\ge
\rho_{\rm DGT}(\alpha;W,H)$
thus providing an upper bound depending only on $\sigma$ and $\eta$.
Although $\widehat\alpha^\star(\sigma,\eta)$ is generally suboptimal with respect to
$\alpha^\star(W,H)$,
it only requires the two scalar quantities $(\sigma,\eta)$,
thus providing analytical insight and enabling a practical distributed implementation.

Although the previous optimization problems assumes a common constant stepsize,
we will develop a distributed implementation whereby all agents asymptotically agree on this value, exploiting the fact that both $\sigma$ and $\eta$ can be computed distributively through consensus-like subroutines.
The main challenge is therefore to construct a suitable upper bound
$\widehat{\rho}_{\rm DGT}(\alpha;\sigma,\eta)$,
which we address in~\cref{sec:special-regimes,sec:heterogeneous}.
In the next subsection, we show that, for the quadratic setting considered here,
$\widehat\rho_{\rm DGT}(\alpha;W,H)$ coincides with the spectral radius of a suitable reduced system matrix.

\subsection{Problem Statement: Pursuing Fast DGT Convergence}
For the sake of readability,
we define the normalized local curvature
$d_i:=h_i/\bar h$, normalized curvatures matrix $D:=\operatorname{diag}(d_1,\ldots,d_N)$, and normalized stepsize $\beta:=\alpha\bar h$.
In the following, we consider $\beta$ instead of $\alpha$ to simplify notation.

Let $\mathcal T:=\left[v\quad U\right]\in \mathbb R^{N\times N}$ denote the orthogonal basis with $v={1}/{\sqrt{N}}\mathbf{1}$ and where $U$ contains the eigenvectors of $W$ associated with eigenvalues $\lambda_2,\ldots,\lambda_N$.
To split the consensus and disagreement dynamics, we define $\widehat x_{c,k}=v^\top x_k$, $\widehat x_{d,k}=U^\top x_k$, $\widehat y_{c,k}=v^\top\widetilde y_k$, and $\widehat y_{d,k}=U^\top\widetilde y_k$, and collect them into $\widehat x_k:=\operatorname{col}(\widehat x_{c,k},\widehat x_{d,k})$ and $\widehat y_k:=\operatorname{col}(\widehat y_{c,k},\widehat y_{d,k})$.
Applying the orthogonal change of basis induced by $\operatorname{diag}\{\mathcal T,\mathcal T\}$ to system~\eqref{eq:compact_dgt} results in the transformed reordered state
$[\widehat x_{c,k},\widehat y_{c,k},\widehat x_{d,k},\widehat y_{d,k}]^\top$
with system matrix $A_{\mathcal T}(\beta)$:
\begin{align}
    A_{\mathcal T}(\beta)=
    \begin{bmatrix}
        1 & \ast\\
        0 &  A(\beta)
    \end{bmatrix}
\end{align}
\begin{equation}
 \hspace{-7pt}A(\beta)
\hspace{-1.5pt}=\hspace{-1.5pt}
\begin{bmatrix}
1-\beta & \beta\delta^\top(\Lambda-I) & -\beta\delta^\top\Lambda\\
0 & \Lambda & -\Lambda\\
-\beta\delta & \beta(I+\widetilde{\Delta})(\Lambda-I)
& \bigl(I-\beta(I+\widetilde{\Delta})\bigr)\Lambda
\end{bmatrix}\hspace{-1pt}
\label{eq:Ahat}
\end{equation}
where the raw block $\ast$ does not affect the spectrum of $A_{\mathcal T}(\beta)$, $\Lambda:=\operatorname{diag}(\lambda_2,\ldots,\lambda_N)$, $\delta:=U^{\top} \Delta v$, $\widetilde\Delta:=U^{\top} \Delta U$ and $\Delta:=D-I$.
We name $A(\beta)$ the reduced system matrix; its spectral radius denoted by $\esr(A(\beta))$ rules the convergence speed of \eqref{eq:compact_dgt} and $x_k\to \mathbf 1 x^\star$ iff $\esr(A(\beta))<1$.
Since $A(\beta)$ is affine in $\beta$ and $\esr(A(\beta))$ is
continuous in the matrix entries, the following standard
lemma holds, which is instrumental to the well-posedness of the ensuing stepsize selection problem. 
\begin{lemma}
\label{lem:ess_radius_continuity}
The map
$\beta\mapsto \esr(A(\beta))$ is continuous.
\arxiv{}{\begin{IEEEproof}
The system matrix in \eqref{eq:compact_dgt} has a structural unit mode
that is independent of $\beta$. After separating this mode, the
remaining nontrivial block has entries that depend continuously on
$\beta$. Since the spectral radius is continuous w.r.t. its entries of a finite-dimensional matrix, the spectral radius $\esr(A(\beta))$ is continuous in $\beta$. 
\end{IEEEproof}}
\end{lemma}

We are now ready to instantiate the nominal optimization problem~\eqref{eq:prob-high-level-nominal}.
Since the normalized stepsize is defined as $\beta=\alpha\bar h$, optimizing over $\alpha$ is equivalent to optimizing over $\beta$, with $\alpha^\star=\beta^\star/\bar h$.

\begin{problem}
Find the normalized stepsize $\beta$ minimizing the asymptotic convergence factor of DGT:
\begin{equation}
\beta^\star=\beta^\star(W,H)
\in
\arg\min_{\beta>0}
\rho(A(\beta)).
\label{eq:ideal_beta_problem}
\end{equation}
\end{problem}

Note that
$$\rho_{\rm DGT}(\alpha;W,H)=\rho_{\rm DGT}(\beta/\bar h;W,H)= \rho(A(\beta)),$$ therefore the optimization problems~\eqref{eq:ideal_beta_problem} and \eqref{eq:prob-high-level-nominal} are equivalent.  

Before tackling the problem in its full generality, in the next~\cref{sec:special-regimes} we first analyze two special regimes in which problem~\eqref{eq:ideal_beta_problem} can be solved exactly using only $\sigma$ and $\eta$. These results establish the theoretical basis for the convergence-certified design developed in~\cref{sec:heterogeneous}.

\section{Optimal Constant Stepsize in Special Regimes}
\label{sec:special-regimes}

In this section, we focus on two cases solvable in closed form, i.e., complete graph and uniform curvatures, respectively.

\subsection{Complete Graph: \texorpdfstring{$\sigma=0,\eta\ge0$}{sigma=0,eta>=0}}\label{sec:complete_graph}
In this case, the consensus matrix reduces to $W=\frac{1}{N}\mathbf 1 \mathbf 1^\top$ and 
$\Lambda=0$. The reduced system matrix $ A(\beta)$ becomes
\begin{equation}\label{eq:Ahat_complete}
     A(\beta)=
    \begin{bmatrix}
    1-\beta & -\beta\delta^\top & 0\\
    0 & 0 & 0\\
    -\beta\delta & -\beta(I+\widetilde{\Delta}) & 0
    \end{bmatrix}.
\end{equation}
Although $A(\beta)$ in~\eqref{eq:Ahat_complete} depends on heterogeneity-dependent terms $\delta$ and $\widetilde{\Delta}$, they do not appear in
its characteristic polynomial. Direct calculation yields $\det\bigl(zI- A(\beta)\bigr)=z^{2(N-1)}\bigl(z-(1-\beta)\bigr)$.
Hence, the spectral radius is $\esr(A(\beta))=|1-\beta|$ and is minimized at the normalized stepsize 
\begin{equation}
    \beta^\star=1.
\end{equation}

\subsection{Uniform Curvatures and dominant $\lambda_2$: $\eta=0, \lambda_2\!\geq\!|\lambda_N|$\label{sec:homo}}
In this case, $D=I$, $\delta=0$, and $\widetilde\Delta=0$. We denote the corresponding \emph{homogeneous} reduced matrix $A(\beta)$ by $A_0(\beta)$:
\begin{equation}\label{eq:A_0}
     A_0(\beta)
    =
    \begin{bmatrix}
    1-\beta & 0 & 0\\
    0 & \Lambda & -\Lambda\\
    0 & \beta(\Lambda-I) & (1-\beta)\Lambda
    \end{bmatrix}.
\end{equation}
The characteristic polynomial of $ A_0(\beta)$ factorizes as $\det(zI- A_0(\beta))=(z-1+\beta)\prod_{i=2}^{N}P_{\lambda_i}(z;\beta)$ where $P_{\lambda_i}(z;\beta):=\det(zI-B_i(\beta))=(z-\lambda_i)^2+\beta\lambda_i(z-1)$ and each block $B_i(\beta)$ is associated with an eigenvalue $\lambda_i<1$:
\begin{equation}
    B_i(\beta):=
    \begin{bmatrix}
    \lambda_i & -\lambda_i\\
    \beta(\lambda_i-1) & (1-\beta)\lambda_i
    \end{bmatrix}.\label{eq:block_Bi}
\end{equation}
In this diagonalized scenario, $\beta^\star$ potentially depends on all eigenvalues of $W$. However, common consensus weight constructions often satisfy $\lambda_2\geq|\lambda_N|$, and hence $\sigma=\lambda_2$
\cite{Boyd_SIAMR_2004,Oreshkin_TSP_2010}, in which case $\beta^\star$ is shown to depend on $\sigma$. This allows us to compute the optimal $\beta^\star$ in closed form. In fact, consider the generic polynomial $P_\lambda(z;\beta):=(z-\lambda)^2+\beta\lambda(z-1)$.
For $\beta>0$, define the worst-case root radius as
\begin{equation}\label{eq:R_0_define}
    R_0(\beta,\sigma):=\max\Big\{|1-\beta|,
    \sup_{\lambda\in[-\sigma,\sigma]}\max{|z|:P_\lambda(z;\beta)=0}
    \Big\}.
\end{equation}
By construction, in total generality $\rho(A_0(\beta))\le R_0(\beta,\sigma)$, however, in dominating second eigenvalue when $\lambda_2>|\lambda_N|$, we have
\begin{equation}\label{eq:stepsize_problem}
   \beta^\star=\arg\min_{\beta>0}\,\,\,R_0(\beta,\sigma)=1-\sqrt{\sigma}
\end{equation}
as proved in the following lemma. 
\begin{lemma}
\label{lem:homogeneous_envelope}
Under the conditions $\sigma\in(0,1)$, $\eta=0$ and $\lambda_2\geq |\lambda_N|$, problem~\eqref{eq:ideal_beta_problem} has the unique solution $\beta^\star=1-\sqrt{\sigma}$, with
$\rho(A_0(\beta^\star))=R_0(\beta^\star,\sigma)=\sqrt{\sigma}$.
\end{lemma}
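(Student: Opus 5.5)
The plan is to reduce the spectrum of $A_0(\beta)$ to the roots of the scalar quadratics $P_{\lambda_i}(z;\beta)$ and then combine a lower bound from the dominant mode $\lambda_2=\sigma$ with an upper bound over all $\lambda\in[-\sigma,\sigma]$. By the factorization already given, $\rho(A_0(\beta))=\max\{|1-\beta|,\max_{i\ge 2}\max\{|z|:P_{\lambda_i}(z;\beta)=0\}\}$. Since $\lambda_2=\sigma$, this yields the lower bound
\[
\rho(A_0(\beta))\ge \max\{|1-\beta|,\, r_\sigma(\beta)\},
\]
where $r_\sigma(\beta)$ is the largest root of $P_\sigma(z;\beta)=z^2-(2-\beta)\sigma z+\sigma(\sigma-\beta)$. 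The inclusion $\lambda_i\in[-\sigma,\sigma]$ gives the matching upper bound $\rho(A_0(\beta))\le R_0(\beta,\sigma)$.

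\textbf{Step 1 (lower envelope and its minimizer).} First I check that the discriminant $\beta\sigma(4-4\sigma+\beta\sigma)$ is positive for $\beta>0$, so $r_\sigma(\beta)$ is real. Next, $P_\sigma(1;\beta)=(1-\sigma)^2>0$, and for fixed $z<1$ the term $\beta\sigma(z-1)$ is strictly decreasing in $\beta$. Hence the largest root $r_\sigma(\beta)$ is strictly increasing in $\beta$ on the range where it stays below $1$. Direct substitution gives
\[
P_\sigma(\sqrt\sigma;1-\sqrt\sigma)=\sigma(1-\sqrt\sigma)^2-(1-\sqrt\sigma)^2\sigma=0,
\]
and $\sqrt\sigma$ is the larger root, since the product of the roots is $\sigma(\sigma-\beta)=\sigma(\sigma+\sqrt\sigma-1)$ and the sum is $(1+\sqrt\sigma)\sigma$. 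Thus $r_\sigma(\beta^\star)=\sqrt\sigma=1-\beta^\star$. For $\beta<\beta^\star$ we have $|1-\beta|>\sqrt\sigma$, and for $\beta>\beta^\star$ we have $r_\sigma(\beta)>\sqrt\sigma$; once $r_\sigma$ reaches $1$, the bound exceeds $\sqrt\sigma$ anyway. So the lower bound is strictly larger than $\sqrt\sigma$ for every $\beta\neq\beta^\star$, which will give uniqueness.

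\textbf{Step 2 (upper bound at $\beta^\star$).} It then remains to show $R_0(\beta^\star,\sigma)\le\sqrt\sigma$, i.e., that every root of $P_\lambda(z;\beta^\star)$ with $\lambda\in[-\sigma,\sigma]$ lies in the closed disk of radius $\sqrt\sigma$. I will split into two cases.
\begin{itemize}
\item \emph{Complex roots.} The modulus is $\sqrt{\lambda(\lambda-\beta^\star)}$. For $\lambda\le 0$ this equals $\sqrt{|\lambda|(|\lambda|+1-\sqrt\sigma)}\le\sqrt{\sigma(\sigma+1-\sqrt\sigma)}\le\sqrt\sigma$, using $\sigma\le\sqrt\sigma$. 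For $\lambda>0$ it is at most $\lambda\le\sigma$.
\item \emph{Real roots.} I will apply the Schur--Cohn/Jury conditions to the rescaled polynomial $q(w)=P_\lambda(\sqrt\sigma\,w;\beta^\star)/\sigma$. Equivalently, I will check three inequalities, each as a function of $\lambda\in[-\sigma,\sigma]$: $P_\lambda(\sqrt\sigma)\ge0$, $P_\lambda(-\sqrt\sigma)\ge0$, and $|\lambda(\lambda-\beta^\star)|\le\sigma$.
\end{itemize}
Step 2 is the main obstacle. The evaluations $P_\lambda(\pm\sqrt\sigma;\beta^\star)$ are quadratic in $\lambda$, so each of the first two conditions reduces to checking endpoint values and a vertex on $[-\sigma,\sigma]$. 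At $\lambda=\sigma$ the first condition holds with equality by Step 1, and at $\lambda=-\sigma$ it amounts to $(\sqrt\sigma+\sigma)^2\ge \sigma(1-\sqrt\sigma)(1+\sqrt\sigma)$, i.e.\ $(1+\sqrt\sigma)^2\ge 1-\sigma$, which holds. I would first try to write these as explicit polynomials in $s=\sqrt\sigma$ and factor them.

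\textbf{Conclusion.} Combining the two steps gives $\rho(A_0(\beta^\star))\le R_0(\beta^\star,\sigma)\le\sqrt\sigma\le\rho(A_0(\beta^\star))$, so all three quantities equal $\sqrt\sigma$. For $\beta\ne\beta^\star$, Step 1 shows $\rho(A_0(\beta))>\sqrt\sigma$. Hence $\beta^\star=1-\sqrt\sigma$ is the unique minimizer of~\eqref{eq:ideal_beta_problem}.
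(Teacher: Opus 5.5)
Your proposal is correct in structure. It differs from the paper mainly in how you obtain the upper bound $R_0(\beta^\star,\sigma)\le\sqrt\sigma$.

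\textbf{Upper bound.} The paper asserts, without derivation, that for every $\beta\in(0,2)$ the root radius of $P_\lambda(z;\beta)$ decreases in $\lambda$ on $[-\sigma,0]$ and increases on $[0,\sigma]$. The supremum over $\lambda$ therefore reduces to the two endpoints, giving the explicit envelope $R_T=\max\{R_+,R_-\}$. At $\beta^\star$ the paper then only checks $R_-(\beta^\star,\sigma)=\sqrt{\sigma(\sigma+1-\sqrt\sigma)}<\sqrt\sigma$. You certify the bound only at the single stepsize $\beta^\star$, using a complex-modulus bound plus a Schur--Cohn/Jury test. This avoids the monotonicity-in-$\lambda$ claim. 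However, it does not give the envelope formula for general $\beta$, which the paper reuses in \cref{lem:stability_certificate}.

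\textbf{Uniqueness.} You treat all $\beta>\beta^\star$ at once by showing that the larger root of $P_\sigma(\cdot;\beta)$ is strictly increasing in $\beta$. The paper instead splits into $(0,1]$, $(1,2)$ and $[2,\infty)$. In fact, the paper's own identity $P_\sigma(\sqrt\sigma;\beta)=\sigma(1-\sqrt\sigma)(1-\sqrt\sigma-\beta)$ would do the same job for every $\beta>\beta^\star$.

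\textbf{Finishing Step 2.} Your Step 2 is unfinished, but it closes quickly. Set $s=\sqrt\sigma$. Then $P_\lambda(s;1-s)=(1-\lambda)(\sigma-\lambda)\ge 0$ for all $\lambda\in[-\sigma,\sigma]$.
For $\lambda\in(0,\sigma]$:
\begin{itemize}
\item The roots are real, since the discriminant $\lambda\beta^\star\bigl(4(1-\lambda)+\beta^\star\lambda\bigr)$ is positive.
\item Their sum $(1+s)\lambda$ is positive, so the spectral radius is the larger root.
\item The vertex $(1+s)\lambda/2$ lies below $s$, so $P_\lambda(s)\ge 0$ already forces the larger root to be at most $s$. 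The condition at $-s$ is not needed.
\end{itemize}
For $\lambda<0$, your complex-modulus bound suffices.

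\textbf{A small slip.} The inequality you attribute to the first condition at $\lambda=-\sigma$ is actually $P_\sigma(-\sqrt\sigma;\beta^\star)\ge 0$. The correct value, $P_{-\sigma}(\sqrt\sigma;\beta^\star)=(\sqrt\sigma+\sigma)^2+\sigma(1-\sqrt\sigma)^2$, is trivially positive.
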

\begin{IEEEproof}
    For fixed $\beta\in(0,2)$, the root radius of $P_\lambda(z;\beta)$ is
    decreasing in $\lambda$ on $[-\sigma,0]$ and increasing on
    $[0,\sigma]$. Moreover, $\lambda_2\geq|\lambda_N|$ implies
    $\sigma=\lambda_2$, so $\lambda=\sigma$ is an actual eigenmode and
    $\rho(A_0(\beta))\geq\max\{|1-\beta|,R_+(\beta,\sigma)\}$, where
    \begin{equation}
    R_+(\beta,\sigma)
    :=\frac{\sigma(2-\beta)
    +\sqrt{\beta\sigma\left(4(1-\sigma)+\beta\sigma\right)}}{2}.
    \end{equation}
    
    For $0<\beta\leq1$, $1-\beta$ is strictly decreasing, whereas
    $R_+(\beta,\sigma)$ is strictly increasing. Their unique intersection
    follows from
    $P_\sigma(1-\beta;\beta)
    =(1-\sigma)(\beta-1+\sqrt{\sigma})(\beta-1-\sqrt{\sigma})$
    and is given by $\beta^\star=1-\sqrt{\sigma}$. At the negative
    endpoint $\lambda=-\sigma$, the roots form a complex-conjugate pair
    with modulus
    \begin{equation}
        R_-(\beta,\sigma):=\sqrt{\sigma(\sigma+\beta)}.
    \end{equation}
    At $\beta^\star$,
    $1-\beta^\star=R_+(\beta^\star,\sigma)=\sqrt{\sigma}$, whereas
    $R_-(\beta^\star,\sigma)<\sqrt{\sigma}$; hence,
    $R_0(\beta^\star,\sigma)=\sqrt{\sigma}$. Since
    $\lambda_2=\sigma$ is an actual eigenvalue,
    $\rho(A_0(\beta^\star))\geq\sqrt{\sigma}$, while
    $\rho(A_0(\beta^\star))\leq R_0(\beta^\star,\sigma)$. Therefore,
    $\rho(A_0(\beta^\star))=\sqrt{\sigma}$. For every other
    $\beta\in(0,1]$, it has 
    $\rho(A_0(\beta))\geq\max\{1-\beta,R_+(\beta,\sigma)\}
    >\sqrt{\sigma}$.
    
    For $1<\beta<2$,
    $P_\sigma(\sqrt{\sigma};\beta)
    =\sigma(1-\sqrt{\sigma})(1-\sqrt{\sigma}-\beta)<0$; hence, the
    larger root of $P_\sigma$ exceeds $\sqrt{\sigma}$ and so does
    $\rho(A_0(\beta))$. Finally, for $\beta\geq2$, one has 
    $\rho(A_0(\beta))\geq|1-\beta|\geq1>\sqrt{\sigma}$. Therefore, problem \eqref{eq:ideal_beta_problem} has the unique solution
    $\beta^\star=1-\sqrt{\sigma}$, with
    $\rho(A_0(\beta^\star))=R_0(\beta^\star,\sigma)=\sqrt{\sigma}$.
\end{IEEEproof}

Note that for $\sigma=0$, the result in \cref{lem:homogeneous_envelope} matches the complete-graph optimum derived in \cref{sec:complete_graph}. Moreover, the root-locus analysis in its proof gives an explicit formula for the worst-case root radius defined in \eqref{eq:R_0_define} as follows:
\begin{equation}\label{eq:root_envelop}
    \begin{aligned}
        R_0(\beta,\sigma)
        &:=\max\{|1-\beta|,R_T(\beta,\sigma)\}
    \end{aligned}
\end{equation}
where $R_T(\beta,\sigma):=\max\{R_+(\beta,\sigma),R_-(\beta,\sigma)\}$
is increasing in $\beta\in(0,2)$ and satisfies
$\lim_{\beta\to0^+}R_T(\beta,\sigma)=\sigma$. The worst-case root radius $R_0(\beta,\sigma)$ and its components are illustrated in \cref{fig:root_envelope}. It serves as the nominal reference for the perturbation-based upper bound on $\esr(A(\beta))$ in the next section.


\section{Stabilizing Constant Stepsize in General Case}\label{sec:heterogeneous}
In this section, we consider DGT in the general regime $\sigma\in(0,1)$ and $\eta>0$, namely, non-complete graph and non-uniform (i.e., heterogeneous) curvatures. Unlike uniform curvatures, the eigenbasis of $W$ does not diagonalize the curvatures matrix $D$. In $A(\beta)$, this appears through $\delta$ and
$\widetilde\Delta$: the terms involving $\delta$ link the consensus component $\widehat y_{c,k}$ of the gradient tracker with the disagreement variables $(\widehat x_{d,k},\widehat y_{d,k})$, while the off-diagonal entries of $\widetilde\Delta$ mix the disagreement modes associated with
different eigenvalues $\lambda_2,\ldots,\lambda_N$. As such, unlike in \cref{sec:homo}, the characteristic polynomial of $A(\beta)$ no longer decomposes into the modal factors $P_{\lambda_i}(z;\beta)$. For this reason, in \cref{sec:worst-case-stability-certificate}, we interpret $A(\beta)$ as a perturbation of the homogeneous reduced matrix $A_0(\beta)$ and seek an upper bound $r<1$ such that $\esr(A(\beta))<r$. 
We refer to any such an $r$ as a \emph{certified radius}.
In \cref{subsec:heteroge_stepsize_rule}, we exploit this certified radius to numerically compute a stepsize $\beta$ that minimizes it for fastest convergence.

\subsection{Convergence Certification}
\label{sec:worst-case-stability-certificate}
\begin{figure}
    \centering
    \includegraphics[width=0.4\textwidth]{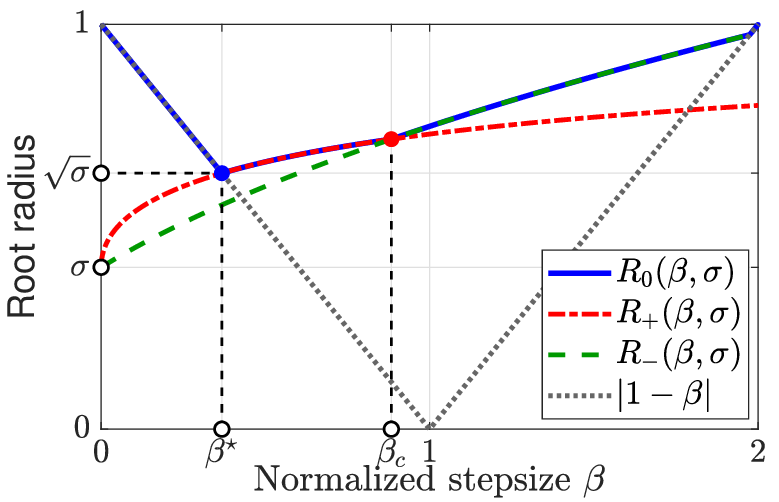}
    \caption{Worst-case root radius $R_0(\beta,\sigma)$ and its components under $\sigma=0.4$. $\beta^\star=1-\sqrt{\sigma}$ is the value when $1-\beta=R_T(\beta,\sigma)$. $\beta_{\rm c}\approx0.8835$ is the nonzero value when $R_+(\beta,\sigma)=R_-(\beta,\sigma)$.}
    \label{fig:root_envelope}
\end{figure}

We first derive a certified upper bound on $\esr(A(\beta))$. The modal factorization of the characteristic polynomial in \cref{sec:homo}, although unavailable for $A(\beta)$ under heterogeneous curvatures, provides a nominal reference for the perturbation argument. Specifically, we decompose $A(\beta)=A_0(\beta)+A_\Delta(\beta)$ where the heterogeneity-induced perturbation part is
\begin{equation}
     A_\Delta(\beta)=
    \begin{bmatrix}
    0 & \beta\delta^\top(\Lambda-I) & -\beta\delta^\top\Lambda\\
    0 & 0 & 0\\
    -\beta\delta & \beta\widetilde\Delta(\Lambda-I) & -\beta\widetilde\Delta\Lambda
    \end{bmatrix}.
\end{equation}
 We bound the effect of $A_\Delta(\beta)$ on the homogeneous reduced matrix $A_0(\beta)$ via perturbation analysis, leading to the key convergence certificate for the principled stepsize design.

\begin{lemma}[Convergence certificate]\label{lem:stability_certificate}
    Let $\sigma\in(0,1)$, $\eta>0$, $\beta\in(0,2)$, and
    $r\in(R_0(\beta,\sigma),1)$ be given.
    If $\Phi(r,\beta,\sigma,\eta)<1$, then $\esr(A(\beta))<r$, where
    \begin{equation}\label{eq:phi}
    \hspace{-4pt}\Phi(r,\beta,\sigma,\eta)=\beta\left( \eta+\frac{\beta\eta^2}{r-|1-\beta|}\right)\frac{\sigma(1+r)}{\left(r-R_T(\beta,\sigma)\right)^2}.
    \end{equation}
\end{lemma}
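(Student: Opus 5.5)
The plan is to show that $zI-A(\beta)$ is nonsingular for every $z\in\mathbb C$ with $|z|\ge r$, which gives $\esr(A(\beta))<r$. I will do this with a Schur complement on the consensus coordinate followed by a small-gain argument on the disagreement block, with the homogeneous modal factors $P_\lambda(z;\beta)$ of \cref{sec:homo} as the nominal reference.

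\textbf{Step 1 (Schur complement on the consensus coordinate).} Partition $A(\beta)=\begin{bmatrix}1-\beta & b^\top\\ c & M\end{bmatrix}$, where $b^\top=\beta\delta^\top[\Lambda-I,\ -\Lambda]$ and $c=\operatorname{col}(0,-\beta\delta)$. The disagreement block splits as $M=M_0+M_\Delta$, with $M_0$ the lower-right block of $A_0(\beta)$ (block diagonal in the $B_i(\beta)$ up to reordering) and
\[
M_\Delta=\begin{bmatrix}0\\ I\end{bmatrix}\beta\widetilde\Delta\,[\Lambda-I,\ -\Lambda].
\]
For $|z|\ge r>R_0(\beta,\sigma)\ge|1-\beta|$ we have $z-(1-\beta)\neq0$, so
\[
\det(zI-A(\beta))=\bigl(z-1+\beta\bigr)\det\Bigl(zI-M_0-E(z)\Bigr),
\]
where
\[
E(z)=M_\Delta+\frac{c\,b^\top}{z-1+\beta}=\begin{bmatrix}0\\ I\end{bmatrix}\Gamma(z)\,[\Lambda-I,\ -\Lambda],\qquad \Gamma(z):=\beta\widetilde\Delta+\frac{\beta^2\delta\delta^\top}{z-1+\beta}.
\]
Since $\|\widetilde\Delta\|\le\|\Delta\|=\eta$, $\|\delta\|\le\eta$ and $|z-1+\beta|\ge r-|1-\beta|$, this gives
\[
\|\Gamma(z)\|\le\beta\Bigl(\eta+\frac{\beta\eta^2}{r-|1-\beta|}\Bigr).
\]

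\textbf{Step 2 (nominal resolvent, modal computation).} Every eigenvalue of $M_0$ is a root of some $P_{\lambda_i}(\cdot;\beta)$ with $|\lambda_i|\le\sigma$. By the envelope description \eqref{eq:root_envelop}, these roots have modulus at most $R_T(\beta,\sigma)<r$, so $zI-M_0$ is invertible. By the push-through identity, $zI-M_0-E(z)$ is then nonsingular iff $I-\Gamma(z)K(z)$ is nonsingular, where $K(z):=[\Lambda-I,\ -\Lambda](zI-M_0)^{-1}\begin{bmatrix}0\\ I\end{bmatrix}$. Because $M_0$ decouples by modes, $K(z)$ is diagonal. Using the adjugate of $zI-B_i(\beta)$, its $i$-th entry is
\[
\frac{(\lambda_i-1)(-\lambda_i)-\lambda_i(z-\lambda_i)}{P_{\lambda_i}(z;\beta)}=\frac{\lambda_i(1-z)}{P_{\lambda_i}(z;\beta)}.
\]
Factor $P_{\lambda_i}(z;\beta)=(z-z_1)(z-z_2)$ with $|z_{1,2}|\le R_T(\beta,\sigma)$. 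This gives $|P_{\lambda_i}(z;\beta)|\ge(|z|-R_T)^2$, hence
\[
\|K(z)\|\le\frac{\sigma(1+|z|)}{(|z|-R_T)^2}.
\]
The right-hand side is decreasing in $|z|$ on $(R_T,\infty)$, so for $|z|\ge r$ it is at most $\sigma(1+r)/(r-R_T)^2$.

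\textbf{Step 3 (small gain).} Combining Steps 1 and 2 gives $\|\Gamma(z)K(z)\|\le\Phi(r,\beta,\sigma,\eta)<1$ for all $|z|\ge r$. Hence $I-\Gamma K$ is invertible, $\det(zI-A(\beta))\neq0$ there, and $\esr(A(\beta))<r$.

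The main obstacle is the uniform root bound used in Step 2. It requires that every root of $P_\lambda(\cdot;\beta)$, for every $\lambda\in[-\sigma,\sigma]$ and not just the endpoints, has modulus at most $R_T(\beta,\sigma)=\max\{R_+,R_-\}$. \Cref{lem:homogeneous_envelope} proves this monotonicity only for the relevant configuration. I would check it directly: for $\lambda\ge0$ the roots are real and the larger one increases in $\lambda$; for $\lambda<0$ with complex roots the modulus is $\sqrt{\lambda^2-\beta\lambda}$, which is maximized at $\lambda=-\sigma$; and the real-root subcase at negative $\lambda$ needs a separate bound. Everything else is a routine norm estimate.
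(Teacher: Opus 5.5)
Your proof is correct. Its core matches the paper's: the same Schur complement on the consensus coordinate $\widehat y_c$, the same factorization of the perturbation through $\operatorname{col}(0,I)$ and $[\Lambda-I,\ -\Lambda]$, the same diagonal modal entries $\lambda_i(1-z)/P_{\lambda_i}(z;\beta)$ (the paper's $Q$ is your $-K$), and the same two norm estimates.

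The difference is in how you close. The paper estimates only on the circle $|z|=r$. It then runs the homotopy $A_\tau=A_0+\tau A_\Delta$, argues that no eigenvalue crosses the circle, and counts eigenvalues at $\tau=0$. You instead note that both bounds are monotone in $|z|$: $|z-1+\beta|\ge|z|-|1-\beta|$, and $s\mapsto(1+s)/(s-R_T)^2$ is decreasing on $(R_T,\infty)$. So the small-gain inequality holds on the whole exterior $|z|\ge r$, and nonsingularity there gives $\esr(A(\beta))<r$ directly. This avoids eigenvalue continuity and zero counting, so your route is more elementary. The homotopy would only be needed if the estimates held on the contour alone.

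Two small points. Your pivot $z-(1-\beta)$ is the correct one; the paper writes $z-|1-\beta|$, which is wrong for $\beta>1$ but gives the same bound. The sign in front of $\delta\delta^\top$ in $\Gamma(z)$ should be a minus, which is harmless since only $\|\Gamma(z)\|$ is used.

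On the obstacle you flag: the real-root subcase for negative $\lambda$ never occurs. The discriminant of $P_\lambda(\cdot;\beta)$ is $\beta\lambda\bigl(4-(4-\beta)\lambda\bigr)$, which is negative for every $\lambda<0$ when $\beta\in(0,2)$. So for $\lambda\in[-\sigma,0)$ the roots form a complex pair of modulus $\sqrt{\lambda^2-\beta\lambda}\le R_-(\beta,\sigma)$.

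For $\lambda\in(0,\sigma]$ the roots are real with nonnegative sum $(2-\beta)\lambda$, so the radius is the larger root $z_+$. Implicit differentiation gives
$\frac{dz_+}{d\lambda}=\frac{(2-\beta)z_++\beta-2\lambda}{2(z_+-\lambda)+\beta\lambda}>0$.
This holds because $z_+>\lambda$, which follows from $P_\lambda(\lambda;\beta)=\beta\lambda(\lambda-1)<0$. Hence $z_+\le R_+(\beta,\sigma)$.

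This is exactly the monotonicity the paper asserts without proof in the proof of \cref{lem:homogeneous_envelope}. Inserting it makes your argument complete.
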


\begin{IEEEproof}
Consider the homotopy $A_\tau(\beta):=A_0(\beta)+\tau A_\Delta(\beta)$, $\tau\in[0,1]$, with $A_1(\beta):=A(\beta)$. We show that no eigenvalue of $A_\tau(\beta)$ crosses $|z|=r$ for any $\tau\in[0,1]$.

For any $z$ on the contour $|z|=r$, since $r>R_0(\beta,\sigma)\ge |1-\beta|$,
one has $z-|1-\beta|\neq0$. Applying the Schur complement to the scalar block
$z-|1-\beta|$ in $zI-A_\tau(\beta)$ gives
\begin{equation}
    \det(zI-A_\tau(\beta))=
    (z-|1-\beta|)\det S_\tau(z),
    \label{eq:det_schur_tau}
\end{equation}
where $S_\tau(z)=S_0(z)+\beta L F_{\Delta,\tau}(z)R$ with
\begin{equation}
    S_0(z)=
    \begin{bmatrix}
    zI-\Lambda & \Lambda\\
    -\beta(\Lambda-I) & zI-\Lambda+\beta\Lambda
    \end{bmatrix},
\end{equation}
\begin{equation}
    L=
    \begin{bmatrix}
    0 & I
    \end{bmatrix}^{\top},
    \qquad
    R=
    \begin{bmatrix}
    -(\Lambda-I) & \Lambda
    \end{bmatrix},
\end{equation}
and
\begin{equation}
    F_{\Delta,\tau}(z)=\tau\widetilde \Delta-\frac{\beta\tau^2}{z-|1-\beta|}\delta\delta^\top.
    \label{eq:F_Delta_tau_def}
\end{equation}
Since $r>R_0(\beta,\sigma)$, \cref{lem:homogeneous_envelope}
implies that $\det S_0(z)\neq0$ for all $z$ on $|z|=r$. Therefore, using the
matrix determinant lemma, one obtains
\begin{equation}\label{eq:det_S_tau_factor}
    \det S_\tau(z)=
    \det S_0(z)
    \det(I\hspace{-0.5pt}+\hspace{-0.5pt}\beta F_{\Delta,\tau}(z)\underbrace{R S_0(z)^{-1}L}_{:=Q(z)}).
\end{equation}
It remains to bound the perturbation determinant factor $\det(I+\beta F_{\Delta,\tau}(z)Q(z))$ in \eqref{eq:det_S_tau_factor}. For any $z$ on the contour $|z|=r$, since $0<\beta<2$ and $r>R_0(\beta,\sigma)\ge |1-\beta|$, the reverse triangle inequality gives $|z-|1-\beta||\ge\bigl||z|-|1-\beta|\bigr|=r-|1-\beta|$. Therefore, one has
\begin{align}
    \|F_{\!\Delta,\tau}(z)\|_2\!&\!\le
    \tau\|\widetilde\Delta\|_2\!+\!\frac{\beta\tau^2}{|z\!-\!|1\!-\!\beta||}
    \|\delta\delta^\top\|_2 
    \overset{(i)}{\le}\!\eta\!+\!\frac{\beta\eta^2}{r\!-\!|1\!-\!\beta|},
    \label{eq:F_bound_coarse}
\end{align}
$\forall\tau\in[0,1]$ where $(i)$ follows from $\|\widetilde \Delta\|_2\leq \|\Delta\|_2$ and $\|\delta\|_2\leq \|\Delta\|_2$.


On the other hand, since $\Lambda$ is diagonal, $S_0(z)$ decomposes
into $N-1$ independent blocks given in \eqref{eq:block_Bi}. Therewith, $Q(z)$ is diagonal with entries $q_i(z)=\frac{\lambda_i(z-1)}{(z-\lambda_i)^2+\beta\lambda_i(z-1)}$ for $i=2,\ldots,N$. The denominator of $q_i(z)$ is exactly the polynomial $P_{\lambda_i}(z;\beta)$. 
By the definition of $R_T(\beta,\sigma)$, the two roots of $P_{\lambda_i}(z;\beta)$ are contained in the disk $|z|\le R_T(\beta,\sigma)$. Hence, for $|z|=r$ with $r>R_T(\beta,\sigma)$, one obtains $|P_{\lambda_i}(z;\beta)|\ge\bigl(r-R_T(\beta,\sigma)\bigr)^2$. The numerator of $q_i(z)$ is upper bounded by $|\lambda_i(z-1)|\le\sigma(1+r)$ on $|z|=r$ because $|\lambda_i|\leq \sigma$. Plugging these relations into $\|Q(z)\|_2$ results in
\begin{equation}
    \|Q(z)\|_2=\max_{i=2,\ldots,N}|q_i(z)|
    \le
    \frac{\sigma(1+r)}
    {(r-R_T(\beta,\sigma))^2}.
    \label{eq:T0_bound_coarse}
\end{equation}
Finally, combining \eqref{eq:F_bound_coarse} and \eqref{eq:T0_bound_coarse}
yields
\begin{equation}
    \beta\|F_{\Delta,\tau}(z)\|_2\|Q(z)\|_2
    \le\Phi(r,\beta,\sigma,\eta)<1.
\end{equation}
Hence $I+\beta F_{\Delta,\tau}(z)Q(z)$ is non-singular for all $|z|=r$ and all $\tau\in[0,1]$. By \eqref{eq:det_schur_tau} and \eqref{eq:det_S_tau_factor}, this implies $\det(zI-A_\tau(\beta))\neq0$ for all $z$ on $|z|=r$ and all $\tau\in[0,1]$. Since $A_\tau(\beta)$ depends continuously on $\tau$, the number of
eigenvalues inside $|z|<r$ is invariant along the homotopy. At
$\tau=0$, all eigenvalues of the homogeneous nominal matrix
$A_0(\beta)$ lie inside $|z|<r$ because $r>R_0(\beta,\sigma)$. Hence
the same holds for $A_1(\beta)=A(\beta)$, and thus $\esr(A(\beta))<r$. The proof is completed.
\end{IEEEproof}

We are ready to state our main result.
A nontrivial certified radius can always be found;
the condition $\Phi(r,\beta,\sigma,\eta)<1$ is well posed, enabling numerical optimization of $\beta$.

\begin{theorem}[Feasibility of the convergence certificate]
\label{thm:N_node_certificate}
Given the essential spectral radius $\sigma\in(0,1)$ and the curvature dispersion $\eta>0$, there exist a normalized stepsize $\beta\in(0,2)$ and a certified radius $r\in(R_0(\beta,\sigma),1)$ such that $\esr(A(\beta))<r$.
\end{theorem}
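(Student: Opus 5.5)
The plan is to reduce the claim to \cref{lem:stability_certificate}. It suffices to find $\beta\in(0,2)$ and $r\in(R_0(\beta,\sigma),1)$ with $\Phi(r,\beta,\sigma,\eta)<1$; the lemma then gives $\esr(A(\beta))<r$. The natural regime is small $\beta$. There the prefactor $\beta$ in \eqref{eq:phi} drives $\Phi$ to zero, provided the denominators $r-|1-\beta|$ and $r-R_T(\beta,\sigma)$ stay bounded away from zero.

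The difficulty is that as $\beta\to0^+$ we have $|1-\beta|\to1$. So any admissible $r<1$ with $r>|1-\beta|$ forces $r-|1-\beta|<\beta$. The second term $\beta\eta^2/(r-|1-\beta|)$ can therefore blow up, and the choice of $r$ must be coupled to $\beta$. I would take $r=1-\beta/2$, which lies in $(|1-\beta|,1)$ for $\beta\in(0,1)$, so that $r-|1-\beta|=\beta/2$. Then
\[
\beta\Bigl(\eta+\frac{\beta\eta^2}{r-|1-\beta|}\Bigr)=\beta(\eta+2\eta^2)\to0 .
\]
For the remaining factor I use the stated properties of $R_T$: it is increasing on $(0,2)$ with $\lim_{\beta\to0^+}R_T(\beta,\sigma)=\sigma<1$. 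Hence for $\beta$ small we get $R_T(\beta,\sigma)\le(1+\sigma)/2$ and $r\ge(3+\sigma)/4$. This gives $r-R_T(\beta,\sigma)\ge(1-\sigma)/4$, and so
\[
\frac{\sigma(1+r)}{(r-R_T(\beta,\sigma))^2}\le\frac{32\sigma}{(1-\sigma)^2}.
\]

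Combining the two bounds, $\Phi\le\beta(\eta+2\eta^2)\cdot32\sigma/(1-\sigma)^2$, which is $<1$ once $\beta<(1-\sigma)^2/\bigl(32\sigma(\eta+2\eta^2)\bigr)$. I also need $r>R_0(\beta,\sigma)=\max\{|1-\beta|,R_T\}$, and this holds by the two inequalities already arranged. I would state the explicit threshold $\bar\beta:=\min\{1,\ (1-\sigma)^2/(32\sigma(\eta+2\eta^2)),\ \beta_1\}$, where $\beta_1$ is such that $R_T(\beta,\sigma)\le(1+\sigma)/2$ and $1-\beta/2\ge(3+\sigma)/4$ hold for $\beta<\beta_1$. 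For instance, $\beta_1=(1-\sigma)/2$ suffices for the latter condition.

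The main obstacle is justifying $\beta_1$ for the $R_T$ bound quantitatively, rather than only via the limit. The limit statement and the monotonicity already give existence, which is all the theorem requires. If an explicit value is wanted, I would bound $R_+(\beta,\sigma)\le\sigma+\sqrt{\beta\sigma(1-\sigma)+\beta^2\sigma^2/4}$ and $R_-\le\sqrt{\sigma(\sigma+\beta)}$ directly from their formulas. Each of these is $\le(1+\sigma)/2$ for $\beta$ below an explicit constant depending on $\sigma$.
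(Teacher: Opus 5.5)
Your proposal is correct and follows the same route as the paper: reduce to \cref{lem:stability_certificate}, take $\beta$ small, and keep $r$ close to $1$ so that $r-|1-\beta|$ is of order $\beta$, which cancels the $\beta$ in the heterogeneity term. The only difference is cosmetic: the paper computes $g(\beta)=\lim_{r\to1^-}\Phi=2\sigma\beta\eta(1+\eta)/(1-R_T(\beta,\sigma))^2$ and then invokes continuity in $r$, whereas your explicit choice $r=1-\beta/2$ gives a concrete threshold on $\beta$ at the cost of looser constants.
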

\begin{IEEEproof}
  Given fixed $\sigma\in(0,1)$ and $\eta>0$, we seek such a stepsize for sufficiently small $\beta\in(0,1)$. For each fixed $\beta\in(0,1)$, the function $r\mapsto\Phi(r,\beta,\sigma,\eta)$ is continuous and strictly decreasing on $(R_0(\beta,\sigma),1)$. Define $g(\beta):=\lim_{r\to1^-}\Phi(r,\beta,\sigma,\eta)$. Since $|1-\beta|=1-\beta$ on $(0,1)$, direct calculation gives $g(\beta)=2\sigma\beta\eta(1+\eta)/(1-R_T(\beta,\sigma))^2$. As $R_T(\beta,\sigma)\to\sigma<1$ when $\beta\to0^+$, we have $g(\beta)\to0$. Moreover, $R_0(\beta,\sigma)<1$ for all sufficiently small $\beta>0$. Hence one can fix $\bar\beta\in(0,1)\subset(0,2)$ such that $g(\bar\beta)<1$ and $R_0(\bar\beta,\sigma)<1$. By continuity in $r$, there exists $\bar r\in(R_0(\bar\beta,\sigma),1)$ such that $\Phi(\bar r,\bar\beta,\sigma,\eta)<1$. Then, the theorem statement follows from~\cref{lem:stability_certificate}.
\end{IEEEproof}

\subsection{Certified Radius Optimization and Stepsize Selection}\label{subsec:heteroge_stepsize_rule}
Given any $(\sigma,\eta)\in[0,1)\times[0,\infty)$ and any $\beta\in(0,2)$, we define the minimal certified radius based on \cref{lem:stability_certificate} as
\begin{equation}\label{eq:r_cert}
    \hspace{-7pt}\widehat\esr_{\rm DGT}(\beta,\sigma,\eta)\hspace{-1pt}:=\hspace{-1pt}\inf\left\{
    r\hspace{-1pt}\in\hspace{-1pt}(R_0(\beta,\sigma),1)\hspace{-1pt}:\hspace{-1pt}\Phi(r,\beta,\sigma,\eta)\hspace{-1pt}<\hspace{-1pt}1\right\}
\end{equation}
with the convention that $\widehat\esr_{\rm DGT}(\beta,\sigma,\eta)=1$ if the right-hand side of~\eqref{eq:r_cert} is the empty set. Whenever $\widehat\esr_{\rm DGT}(\beta,\sigma,\eta)<1$, \cref{lem:stability_certificate} ensures $\esr(A(\beta))\leq\widehat\esr_{\rm DGT}(\beta,\sigma,\eta)<1$. Since the quantity $\widehat\esr_{\rm DGT}(\beta,\sigma,\eta)$ corresponds to the smallest radius $r$ certified by \cref{thm:N_node_certificate}, we consider the following generalization of~\eqref{eq:stepsize_problem} to the heterogeneous-curvature setting, which acts as a computationally tractable proxy of \eqref{eq:ideal_beta_problem}:
\begin{equation}\label{eq:cert_beta_arg_min}
    \widehat\beta^\star(\sigma,\eta)\in\operatorname*{arg\,min}_{0<\beta<2}\;
    \widehat\esr_{\rm DGT}(\beta,\sigma,\eta).
\end{equation}
In the sequel, we refer to $\widehat\beta^\star (\sigma,\eta)$ as certified stepsize and denote $\widehat\esr_{\rm DGT}^\star(\sigma,\eta):=\widehat\esr_{\rm DGT}(\widehat\beta^\star(\sigma,\eta),\sigma,\eta)$.
The following lemma establishes that the solution of~\eqref{eq:cert_beta_arg_min} yields a nontrivial bound and is continuous in $\sigma$ and $\eta$. These properties are foundational to our stepsize design.
\begin{lemma}[Continuity of the certified stepsize rule]
\label{lem:cert_stepsize_unique_continuous}
For any $(\sigma,\eta)\in[0,1)\times[0,\infty)$, the certified radius
minimization problem in \eqref{eq:cert_beta_arg_min} admits a unique solution $\widehat\beta^\star(\sigma,\eta)$. Moreover, the map $(\sigma,\eta)\mapsto\widehat\beta^\star(\sigma,\eta)$ is continuous on $[0,1)\times[0,\infty)$ and the minimal certified radius
satisfies $\widehat\esr_{\rm DGT}^\star(\sigma,\eta)<1$.
\end{lemma}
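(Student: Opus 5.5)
The plan is to make the definition of $\widehat\esr_{\rm DGT}$ explicit. Since $\Phi$ is continuous and strictly decreasing in $r$ on $(R_0(\beta,\sigma),1)$, with $\Phi\to+\infty$ as $r\downarrow R_0$ whenever $\eta>0$, the set in \eqref{eq:r_cert} is an interval $(r^\dagger(\beta),1)$. Here $r^\dagger(\beta)$ is the unique root of $\Phi(r,\beta,\sigma,\eta)=1$ when it exists, and otherwise $\widehat\esr_{\rm DGT}=1$. The blow-up holds because $R_0=\max\{|1-\beta|,R_T\}$, so the denominator of one of the two factors in \eqref{eq:phi} vanishes at $r=R_0$.

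For the degenerate cases, when $\eta=0$ we have $\Phi\equiv0$, so $\widehat\esr_{\rm DGT}=R_0(\beta,\sigma)$. \cref{lem:homogeneous_envelope} together with \eqref{eq:root_envelop} then gives the unique minimizer $1-\sqrt\sigma$ with value $\sqrt\sigma<1$, and this includes $\sigma=0$, where the minimizer is $\beta=1$. When $\sigma=0$ we again have $\Phi\equiv0$, so the minimizer is $\beta=1$.

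Next I would apply the implicit function theorem to $\Phi(r,\beta,\sigma,\eta)=1$. Since $\partial_r\Phi<0$, the map $(\beta,\sigma,\eta)\mapsto r^\dagger$ is continuous wherever the root lies in $(R_0,1)$, and it is extended continuously by $1$ elsewhere. Hence $\widehat\esr_{\rm DGT}$ is continuous in $(\beta,\sigma,\eta)$. \cref{thm:N_node_certificate} shows that some $\beta$ gives a value strictly less than $1$. Near $\beta=0$ the quantity $\Phi$ has the factor $\beta$ but $R_T\to\sigma$, and $\widehat\esr_{\rm DGT}\ge R_0\ge|1-\beta|\to1$. Near $\beta=2$ we have $R_0\ge|1-\beta|\to1$ as well. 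So the minimum over $(0,2)$ is attained in a compact subinterval, and $\widehat\esr^\star_{\rm DGT}<1$.

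Uniqueness is the main obstacle. I would show that on the set where $\widehat\esr_{\rm DGT}<1$, the function $\beta\mapsto\widehat\esr_{\rm DGT}$ is strictly decreasing up to some point and strictly increasing after it. For $\beta\le1$, $\Phi$ is strictly increasing in $\beta$: the factor $\beta$, the $\beta\eta^2$ term, $R_T$ increasing, and $r-|1-\beta|=r-1+\beta$ increasing partly offset each other, and I would check by differentiation that the net effect is monotone or at least that the level curve is quasi-convex. Combining this with the implicit derivative $d r^\dagger/d\beta=-\partial_\beta\Phi/\partial_r\Phi$, I would argue that $r^\dagger$ is strictly quasiconvex in $\beta$, giving a single minimizer. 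Failing a clean derivative argument, I would fall back on showing that the lower envelope $R_0$ is quasiconvex with minimizer $1-\sqrt\sigma$ and that $\Phi$ shifts the curve in a way that preserves strict quasiconvexity.

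Continuity of $\widehat\beta^\star$ in $(\sigma,\eta)$ then follows from Berge's maximum theorem. The objective is jointly continuous, the minimizers can be confined locally uniformly to a compact $\beta$-interval by the boundary estimates above, and the minimizer is unique, so the argmin correspondence is single-valued and upper hemicontinuous, hence continuous. At $\eta=0$, continuity needs $r^\dagger\to R_0$ as $\eta\to0$, which holds because $\Phi\to0$ uniformly on compact sets of $r>R_0$.
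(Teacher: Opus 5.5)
Your reduction of \eqref{eq:r_cert} to the root $r^\dagger(\beta)$ of $\Phi=1$ is sound. So are the degenerate cases $\sigma=0$ and $\eta=0$, the bound $\widehat\esr^\star_{\rm DGT}<1$ via \cref{thm:N_node_certificate}, and the compactness/Berge argument for continuity. The continuity argument, however, is conditional on uniqueness, and uniqueness is the real content of the lemma. There your plan does not work as written.

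\textbf{The monotonicity claim is false.} You assert that, for fixed $r$, $\Phi(r,\cdot,\sigma,\eta)$ is strictly increasing in $\beta$ on $\beta\le1$. But the term $\beta^2\eta^2/(r-1+\beta)$ blows up as $\beta\downarrow 1-r$, so $\Phi$ at fixed $r$ first decreases and then increases. For example, with $\sigma=0.4$, $\eta=1$, $r=0.9$, one gets $\Phi\approx7.8$ at $\beta=0.11$ and $\Phi\approx4.0$ at $\beta=0.15$. The claim also contradicts your own boundary analysis. Monotone increase would give $dr^\dagger/d\beta=-\partial_\beta\Phi/\partial_r\Phi>0$ everywhere, which pushes the minimizer to $\beta\to0^+$, where you showed $\widehat\esr_{\rm DGT}\to1$.

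\textbf{The fallbacks are unproved, and $\beta>1$ is missing.} Saying ``the level curve is quasi-convex'', or that ``$\Phi$ shifts $R_0$ while preserving strict quasiconvexity'', restates what has to be proved. Strict quasiconvexity is not preserved under perturbation in general. You also never handle $\beta\in(1,2)$.

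\textbf{The missing idea is to work at a fixed radius.} For $\sigma,\eta>0$, the sublevel sets $\{\beta:\widehat\esr_{\rm DGT}(\beta,\sigma,\eta)\le r\}$ are exactly $\mathcal B_r:=\{\beta:R_0(\beta,\sigma)<r,\ \Phi(r,\beta,\sigma,\eta)\le1\}$. So quasiconvexity of $r^\dagger$ is equivalent to a single-valley (not monotone) shape of $\Phi(r,\cdot)$ for each fixed $r$. The paper proceeds in four steps.
\begin{enumerate}
\item It replaces $\beta\in[1,2)$ by $2-\beta$. This keeps $|1-\beta|$, does not increase $R_T$ (which is increasing in $\beta$), and decreases the prefactors $\beta$ and $\beta\eta^2$. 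So it can only lower the certified radius, and one may restrict to $(0,1]$.
\item It substitutes $\xi=\beta-(1-r)$ and rewrites $\Phi\le1$ as $\Theta_r(\xi)\le r$, where $\Theta_r(\xi)=R_T(1-r+\xi,\sigma)+\sqrt{\sigma(1+r)}\,\sqrt{\eta(1-r+\xi)+\eta^2(1-r+\xi)^2/\xi}$.
\item It shows $\Theta_r$ has a unique minimizer on $(0,r]$. The heterogeneity term decreases up to $\xi_N=(1-r)\sqrt{\eta/(1+\eta)}$ and increases afterwards. On $(0,\xi_N]$ its convexity dominates the concave branches of $R_T$, and beyond $\xi_N$ both terms are non-decreasing.
\item With $r^\star=\inf\{r:\mathcal B_r\neq\emptyset\}$, continuity of $\chi(r)=\min_\xi\Theta_r(\xi)$ gives $\chi(r^\star)=r^\star$. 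Hence $\mathcal B_{r^\star}$ is a single point, which is the unique minimizer.
\end{enumerate}

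Without this, or an equivalent proof that $\Phi(r,\cdot)$ is unimodal at every fixed $r$, your argument establishes existence of a minimizer but not uniqueness. The single-valuedness that Berge's theorem needs for continuity of $\widehat\beta^\star$ is then also missing.
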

\begin{IEEEproof}
     The proof is provided in Appendix~\ref{appendix:A}.
\end{IEEEproof}

Given $(\sigma,\eta)\in[0,1)\times[0,\infty)$ and a candidate $\beta$, the function $\Phi(r,\beta,\sigma,\eta)$ is non-increasing in $r$ on $(R_0(\beta,\sigma),1)$. Hence, feasibility of $\beta$ can be checked by testing whether $R_0(\beta,\sigma)<1$ and $\Phi(1-\epsilon,\beta,\sigma,\eta)<1$ for some small $\epsilon>0$. If feasible, $\widehat\esr_{\rm DGT}(\beta,\sigma,\eta)$ can be obtained by a scalar bisection search on $r\in(R_0(\beta,\sigma),1)$. The outer minimization in \eqref{eq:cert_beta_arg_min} then reduces to a one-dimensional search on $\beta\in(0,2)$.

\begin{remark}[Recovery of special cases]
    The certified optimization~\eqref{eq:cert_beta_arg_min} recovers the optimal stepsize derived in \cref{sec:complete_graph,sec:homo}.
    If $\eta=0$ and $\lambda_2\geq |\lambda_N|$ (uniform curvatures and dominant second eigenvalue), then $\widehat\esr_{\rm DGT}(\beta,\sigma,0)=R_0(\beta,\sigma)=\esr(A(\beta))$ and~\eqref{eq:cert_beta_arg_min} reduces to worst-case root radius minimization yielding $\widehat\beta^\star(\sigma,0)=\beta^\star=1-\sqrt{\sigma}$. If $\sigma=0$ (complete graph), then $\widehat\esr_{\rm DGT}(\beta,0,\eta)=R_0(\beta,0)=|1-\beta|$ for $\beta\in(0,2)$, recovering the optimum $\beta^\star=\widehat\beta^\star(0,\eta)=1$.
\end{remark}

\arxiv{}{%
\subsection{Explicit Asymptotic Expansions}
In this subsection, we derive explicit asymptotic expansions of the proposed stepsize rule in \eqref{eq:cert_beta_arg_min} in the limits $\eta\to 0^+$ and $\eta\to+\infty$. These expansions provide closed-form approximations for both the certified stepsize and the associated certified radius. Note that the complete-graph case $\sigma=0$ is degenerate and gives $\beta^\star(0,\eta)=1$, we focus on the nondegenerate case $\sigma\in(0,1)$. For brevity, denote the certified stepsize and the corresponding certified radius with certain $\eta$ by $\beta_\eta:=\beta^\star(\sigma,\eta)$ and $r_\eta:=r_{\rm cert}(\beta_\eta,\sigma,\eta)$, respectively.

If the curvature-dispersion is in the weak limit $\eta\to 0^+$, we have the following expansions.
\begin{proposition}[Small curvature-dispersion regime]
\label{prop:weak_heterogeneity_expansion}
For $\sigma\in(0,1)$, the certified stepsize and the corresponding certified radius admit the following asymptotic expansions as $\eta\to0^+$:
\begin{align}
        \beta_\eta&=1-\sqrt{\sigma}-\frac{\sqrt{C_0}}{1+\kappa}\sqrt{\eta}+o(\sqrt{\eta}),\label{eq:beta_eta0}\\
        r_\eta&=\sqrt{\sigma}+\frac{\sqrt{C_0}}{1+\kappa}\sqrt{\eta}+o(\sqrt{\eta}),\label{eq:r_eta0} 
\end{align}
where $C_0=\sigma(1-\sigma)$ and $\kappa=\frac{\sqrt{\sigma}}{\sqrt{\sigma}+2}$.
\end{proposition}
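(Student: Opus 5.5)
The plan is to reduce the certified-radius problem to a two-parameter balance at order $\sqrt{\eta}$ around the homogeneous optimum of \cref{lem:homogeneous_envelope}. Throughout, I use the smooth branch $R_T=R_+$ near $\beta_0:=1-\sqrt{\sigma}$. This is justified because at $\beta_0$ one has $R_-(\beta_0,\sigma)<\sqrt{\sigma}=R_+(\beta_0,\sigma)$, as shown in that proof. By continuity this strict inequality persists in a neighborhood of $\beta_0$.

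\emph{Step 1 (localization).} By \cref{lem:cert_stepsize_unique_continuous}, $\beta_\eta\to\widehat\beta^\star(\sigma,0)=\beta_0$ and $r_\eta\to\sqrt{\sigma}$ as $\eta\to0^+$, using the remark on recovery of special cases. I would first prove the a priori bounds $r_\eta-\sqrt{\sigma}=O(\sqrt{\eta})$ and $\beta_\eta-\beta_0=O(\sqrt{\eta})$.
\begin{itemize}
\item The upper bound on $r_\eta$ follows by plugging the trial stepsize $\beta=\beta_0$ into \eqref{eq:phi}. Then $r=\sqrt{\sigma}+K\sqrt{\eta}$ makes $\Phi<1$ for large $K$, since the denominator $(r-R_T)^2$ is of order $\eta$ while the numerator is of order $\eta$ with a smaller constant.
\item The lower bound $r_\eta\ge R_0(\beta_\eta,\sigma)$ is quadratic-free near $\beta_0$: $R_0$ has a V-shaped minimum there with nonzero one-sided slopes. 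Hence $|\beta_\eta-\beta_0|=O(\sqrt{\eta})$ as well.
\end{itemize}
This justifies the ansatz $\beta=\beta_0-b\sqrt{\eta}$ and $r=\sqrt{\sigma}+c\sqrt{\eta}$ with bounded $b,c$. I expect the argument to be cleanest by passing to subsequential limits of $(b,c)$.

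\emph{Step 2 (leading-order form of the certificate).} Under the ansatz, $r-|1-\beta|=(c-b)\sqrt{\eta}$, and the admissibility condition $r>R_0$ requires $c\ge b$. The second term inside the bracket of $\Phi$ is $\beta\eta^2/((c-b)\sqrt{\eta})$. This is $o(\eta)$ unless $c-b$ shrinks faster than $\eta$, which I will exclude at the optimum by a separate short argument: if $c-b\to0$, one may shift $b$ slightly without changing $c$ to first order. Hence $\Phi=1$ becomes
\[
r-R_+(\beta,\sigma)=\sqrt{\beta\eta\sigma(1+r)}\,(1+o(1)).
\]
Since $\beta_0(1+\sqrt{\sigma})=1-\sigma$, the right-hand side equals $\sqrt{C_0\eta}+o(\sqrt{\eta})$. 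Expanding $R_+(\beta,\sigma)=\sqrt{\sigma}-\kappa b\sqrt{\eta}+o(\sqrt{\eta})$ with $\kappa:=\partial_\beta R_+(\beta_0,\sigma)$ gives the leading-order relation $c=\sqrt{C_0}-\kappa b$.

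\emph{Step 3 (optimization).} The limiting problem is to minimize $c$ subject to $c=\sqrt{C_0}-\kappa b$ and $c\ge b$. Since $\kappa>0$ (because $R_T$ is increasing in $\beta$), $c$ decreases in $b$. The optimum is therefore attained where the constraint binds, $b=c$, giving $b=c=\sqrt{C_0}/(1+\kappa)$. This yields \eqref{eq:beta_eta0} and \eqref{eq:r_eta0}. A standard $\Gamma$-convergence/subsequence argument, using uniqueness of the limiting minimizer, upgrades this to the full $o(\sqrt{\eta})$ expansions.

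\emph{Main obstacle: computing $\kappa$.} The step I expect to be hardest is the explicit computation $\kappa=\sqrt{\sigma}/(\sqrt{\sigma}+2)$. I would differentiate implicitly rather than differentiate the closed form of $R_+$. Since $P_\sigma(R_+(\beta,\sigma);\beta)=0$, we have
\[
\kappa=-\frac{\partial_\beta P_\sigma}{\partial_z P_\sigma}
=-\frac{\sigma(z-1)}{2(z-\sigma)+\beta\sigma}\Big|_{z=\sqrt{\sigma},\,\beta=\beta_0}.
\]
Substituting, this equals
\[
\frac{\sigma(1-\sqrt{\sigma})}{2\sqrt{\sigma}(1-\sqrt{\sigma})+\sigma(1-\sqrt{\sigma})}=\frac{\sqrt{\sigma}}{2+\sqrt{\sigma}},
\]
as claimed.

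A secondary delicate point is the boundary term $\beta\eta^2/(r-|1-\beta|)$ exactly on the binding constraint. There I must show that the infimum in \eqref{eq:r_cert} is approached with $c-b$ small but still $\gg\eta^{3/2}/\sqrt{\eta}$, so the term stays negligible.
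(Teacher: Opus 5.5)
Your proposal is correct and is essentially the paper's argument. Your $(c-b)\sqrt{\eta}$ and $(c+\kappa b)\sqrt{\eta}$ are exactly the paper's gaps $g_1$ and $g_2$, the active certificate gives $g_2=\sqrt{C_0\eta}+o(\sqrt{\eta})$, your binding constraint $b=c$ is the paper's $g_1=o(\sqrt{\eta})$ (the paper resolves it further as $g_1\sim\eta^{3/4}$), and $\kappa$ comes from the same implicit differentiation.

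One slip to fix: the term $\beta\eta^2/g_1$ is $o(\eta)$ iff $g_1\gg\eta$, i.e.\ $c-b\gg\sqrt{\eta}$, not $c-b\gg\eta$. The paper's $c-b\sim\eta^{1/4}$ meets the correct condition, and this only matters for the achievability (upper-bound) construction, since for the lower bound you can simply drop this nonnegative term.
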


\begin{IEEEproof}[Sketch of proof]
The proof views the case $\eta>0$ as a perturbation of the homogeneous optimizer $(\beta_0,r_0)=(1-\sqrt{\sigma},\sqrt{\sigma})$. By introducing two local gap variables, the certified-radius problem is reduced to a leading-order balance between the heterogeneity term and the active root-envelope gap. This balance determines the $\sqrt{\eta}$-scale correction, and the local linearization around $(\beta_0,r_0)$ gives the stated expansions.
Please see Appendix~\ref{appendix:B} for detailed calculations.
\end{IEEEproof}

In contrast, if the curvature-dispersion is in the strong limit: $\eta\to +\infty$, we have the following expansions.
\begin{proposition}[Large curvature-dispersion regime]\label{prop:strong_hete_expansion}
For $\sigma\in(0,1)$, the certified stepsize and the corresponding certified radius admit the following asymptotic expansions as $\eta\to+\infty$:
\begin{align}
        \beta_\eta&=\frac{(1-\sigma)^2}{4\sigma}\eta^{-2}+o(\eta^{-2}),\label{eq:beta_infty_eta}\\
        r_\eta&=1-\frac{(1-\sigma)^2}{8\sigma}\eta^{-2}+o(\eta^{-2}).\label{eq:r_infty_eta}
\end{align}
\end{proposition}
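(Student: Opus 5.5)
The plan is a two-scale asymptotic analysis of the defining equation $\Phi=1$. The ansatz is $\beta=b\,\eta^{-2}$ and $r=1-t\,\eta^{-2}$ with $b,t>0$ of order one. In this regime $|1-\beta|=1-\beta$, so $r-|1-\beta|=(b-t)\eta^{-2}$. We also have $R_T(\beta,\sigma)\to\sigma$, hence $r-R_T\to1-\sigma$, and $\sigma(1+r)\to2\sigma$. Substituting into \eqref{eq:phi} gives
\begin{equation*}
\Phi=\Bigl(b\eta^{-1}+\frac{b^2}{b-t}\Bigr)\frac{\sigma(1+r)}{(r-R_T)^2}
\;\longrightarrow\;\frac{c\,b^2}{b-t},\qquad c:=\frac{2\sigma}{(1-\sigma)^2}.
\end{equation*}
The term $\beta\eta=b\eta^{-1}$ is lower order. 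By the observation after \cref{lem:cert_stepsize_unique_continuous}, $\Phi$ is strictly decreasing in $r$, so the minimal certified radius is characterized by $\Phi=1$. To leading order this reads $t=b-c b^2$. Minimizing $r$ means maximizing $t(b)=b-cb^2$, which is strictly concave. Its maximum is at $b^\star=1/(2c)=(1-\sigma)^2/(4\sigma)$, with value $t^\star=1/(4c)=(1-\sigma)^2/(8\sigma)$. These are exactly \eqref{eq:beta_infty_eta} and \eqref{eq:r_infty_eta}. The constraint $r>R_0=\max\{1-\beta,R_T\}$ holds because $t<b$ and $R_T\to\sigma<1$.

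To make this rigorous I would proceed in three steps.

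\emph{Step 1 (a priori localization).} This is the step I expect to be the main obstacle, since a priori the minimizer could sit at $\beta=O(1)$ or at $\beta\ll\eta^{-2}$. For any feasible $(\beta,r)$, drop the $\eta$ term and use $r-R_T\le1$ and $1+r\ge1$ to get
\begin{equation*}
1>\Phi\ge\frac{\sigma\beta^2\eta^2}{r-|1-\beta|}.
\end{equation*}
This gives $1-r\le 1-|1-\beta|-\sigma\beta^2\eta^2\le\beta-\sigma\beta^2\eta^2$, valid for all $\beta\in(0,2)$. Consequently any $\beta$ with $\beta\eta^2\ge1/\sigma$ yields $1-r\le0$, contradicting $r<1$. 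Moreover $1-r\le\beta$, so any $\beta\le\epsilon\eta^{-2}$ yields $1-r\le\epsilon\eta^{-2}$. Since the candidate $b^\star$ already achieves $t^\star\eta^{-2}$ asymptotically, the optimal $\beta_\eta$ must satisfy $\beta_\eta\eta^2\in[\epsilon,1/\sigma]$ for some fixed small $\epsilon$ and all large $\eta$.

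\emph{Step 2 (uniform convergence on the compact scale).} On $b\in[\epsilon,1/\sigma]$, define $t_\eta(b)$ by $r_{\rm cert}(b\eta^{-2},\sigma,\eta)=1-t_\eta(b)\eta^{-2}$. Solving $\Phi=1$ exactly gives
\begin{equation*}
t_\eta(b)=b-\frac{b^2\,\gamma_\eta}{1-b\eta^{-1}\gamma_\eta},\qquad \gamma_\eta:=\frac{\sigma(1+r)}{(r-R_T)^2}.
\end{equation*}
Here $\gamma_\eta\to c$ uniformly in $b$, because $r\to1$ and $R_T(b\eta^{-2},\sigma)\to\sigma$ uniformly. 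Hence $t_\eta\to b-cb^2$ uniformly on the compact interval. Strictly, $\gamma_\eta$ depends on $r$ itself, so I would control this via monotonicity, or by a simple fixed-point bound using $r\in(1-b\eta^{-2},1)$.

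\emph{Step 3 (convergence of maximizer and value).} The limit $b-cb^2$ has a unique strict maximizer. Together with uniform convergence, this gives $\beta_\eta\eta^2\to b^\star$ and $(1-r_\eta)\eta^2\to t^\star$, which is the claimed $o(\eta^{-2})$ expansions. Uniqueness of $\beta_\eta$ itself comes from \cref{lem:cert_stepsize_unique_continuous}.
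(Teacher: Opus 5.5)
Your proposal is correct. Its core computation is the same as the paper's. The paper also writes $r_\eta=1-\beta_\eta+g_1$ and uses $\sigma(1+r_\eta)/(r_\eta-R_T)^2\to 2\sigma/(1-\sigma)^2$. It reduces the leading-order certificate to $\frac{2\sigma}{(1-\sigma)^2}\,c_\beta^2/c_g\le 1$, which is your $t\le b-cb^2$ with $c_g=b-t$, and then maximizes $c_\beta-c\,c_\beta^2$.

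The difference lies in how the $\eta^{-2}$ scale is justified.
\begin{itemize}
\item The paper posits power laws $\beta_\eta=c_\beta\eta^{-p}$ and $g_1=c_g\eta^{-q}$. It derives $p\le q\le 2p-2$ by exponent counting and then picks the smallest feasible exponent $p=2$ as the one that ``maximizes the margin.'' This presupposes pure power-law behavior of the optimizer. It also never shows that the true minimizer converges to the maximizer of the limiting problem.
\item Your Step 1 replaces the ansatz with a priori bounds read off directly from $\Phi<1$, using only $r-R_T<1$ and $1+r\ge 1$. These bounds give infeasibility for $\beta\eta^2\ge 1/\sigma$ and a margin of at most $\beta$.
\item Your Steps 2--3 then give convergence of both the argmax and the value. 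They rely on uniform convergence of the rescaled margin on a compact $b$-interval and on the strict maximizer of $b-cb^2$. That is exactly what the $o(\eta^{-2})$ claims require.
\end{itemize}
Your route is thus the rigorous version of the paper's formal matching, at the cost of slightly more bookkeeping.

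Two small points need tidying.
\begin{itemize}
\item Step 1 uses the lower bound $1-r_\eta\ge (t^\star-o(1))\eta^{-2}$, which comes from Step 2 evaluated at $b^\star$. Order the steps accordingly.
\item Since $1/c=(1-\sigma)^2/(2\sigma)<1/\sigma$, your interval $[\epsilon,1/\sigma]$ contains values of $b$ for which $\Phi=1$ has no root in $(R_0,1)$. For those $b$ the exact formula for $t_\eta(b)$ does not apply, and the rescaled margin is $0$ by the convention $\widehat\rho_{\rm DGT}=1$. On this scale $\lim_{r\to1^-}\Phi\to cb$, so a root exists asymptotically iff $cb<1$. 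State the uniform limit as $\max\{0,b-cb^2\}$; this leaves the argmax $b^\star$ unchanged.
\end{itemize}
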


\begin{IEEEproof}[Sketch of proof]
We rewrite the certified radius relative to the admissible boundary as $r_\eta=1-\beta_\eta+g_\eta$ and derives the leading-order certificate in the strong heterogeneity regime. The resulting reduced feasibility problem determines the largest admissible stability margin $1-r_\eta$. Optimizing this limiting margin gives both the $\eta^{-2}$ scaling and the coefficients in the stated expansions. The detailed derivations are reported in Appendix~\ref{appendix:C}.
\end{IEEEproof}}

\section{Stabilizing Adaptive Uncoordinated Stepsizes}
\label{sec:distributed}

We now construct a fully distributed procedure that enables each agent to recover our approximate solution to problem~\eqref{eq:ideal_beta_problem}, i.e., $\alpha_i\to\widehat\alpha^\star(\sigma,\eta):=\widehat\beta^\star(\sigma,\eta)/\bar{h}$, using locally available information.
To circumvent dependence on global parameters, namely average curvature $\bar h$, essential spectral radius $\sigma$, and curvature dispersion $\eta$, we add a distributed estimation routine which runs in parallel to DGT updates.
Each agent $i$ performs:
\begin{align}
    \widehat {\bar h}_{i,k+1}&=
    \sum\nolimits_{j\in\mathcal N_i\cup\{i\}} w_{ij}\widehat{\bar h}_{j,k},\qquad \widehat{\bar h}_{i,0}=h_i\label{eq:ave_curv}\\
    \nu_{i,k+1}&=\sum\nolimits_{j\in\mathcal N_i\cup\{i\}} w_{ij}\nu_{j,k},\qquad \nu_{i,0}\stackrel{\rm i.i.d.}{\sim}\mathcal D_\nu\label{eq:ave_rand}
\end{align}
where $\mathcal D_\nu$ is an absolutely continuous distribution on $\mathbb R$ w.r.t. the Lebesgue measure, e.g., $\mathcal D_\nu=\mathcal N(0,1)$.
The update \eqref{eq:ave_curv} tracks the average curvature $\bar h$,
whereas \eqref{eq:ave_rand} is used to construct a power-iteration-based estimator of $\sigma$~\cite{saad2011numerical}. Specifically,
\begin{subequations}\label{eq:sigma_estimate}
    \begin{align}
    \widehat{\sigma}_{i,k}^{0}
    &=
    \sqrt{
   {|\nu_{i,k+1}-\nu_{i,k}|}/
         {|\nu_{i,k-1}-\nu_{i,k-2}|}},\label{eq:sigma_local_ratio}\\
    \widehat{\sigma}_{i,k}^{\ell}
    &=
    \max_{j\in\mathcal N_i\cup\{i\}}
    \left\{
    \widehat{\sigma}_{j,k-1}^{\ell-1}
    \right\},
    \qquad
    \ell=1,2,\ldots,m,\label{eq:sigma_max_consensus}
    \end{align}
\end{subequations}
where, in \eqref{eq:sigma_local_ratio}, we adopt a two-step ratio construction, avoiding parity oscillations that may arise when both $\sigma$ and $-\sigma$ are dominant eigenvalues of $W$. Let $Wu_j=\lambda_j u_j$ and
$p_j:=(\lambda_j-1)u_j^\top\nu_0$ where $(\lambda_j,u_j)$ denotes an orthonormal eigenpair of $W$, one has
$\nu_{i,k+1}-\nu_{i,k}
=\sigma^k(\sum_{\lambda_j=\sigma}p_j[u_j]_i+(-1)^k \sum_{\lambda_j=-\sigma}p_j[u_j]_i)+o(\sigma^k)$.
Since all dominant modal contributions
$p_j[u_j]_i$, with $|\lambda_j|=\sigma$, may vanish at some 
node $i$, $\widehat \sigma_{i,k}^0$ is not guaranteed to converge to $\sigma$. The random initialization of $\nu_{i,0}$ nevertheless activates the dominant modes almost
surely (a.s.), and hence at least one local
ratio $\widehat \sigma_{i,k}^0$ converges to $\sigma$ a.s., while the other locally observable spectral modulus is no larger than $\sigma$. We therefore propagate $\widehat \sigma_{i,k}^0$ using dynamic max consensus protocol in~\cite{Deplano_TAC_Dyn_max_consensus} where $m\in\mathbb N$ is the upper bound of the network diameter, as seen in \eqref{eq:sigma_max_consensus}.
Using the local estimate $\widehat{\bar h}_{i,k}$, which converges to $\bar h$ by \cref{ass:graph_connected}, agent $i$ constructs a time-varying estimation of curvature-dispersion signal $|{h_i}/{\widehat{\bar h}_{i,k}}-1|$.
To track the maximum of these $N$ signals at each iteration $k$, the dynamic max consensus protocol is utilized again as follows:
    \begin{equation}
    \widehat \eta_{i,k}^0=\left|\frac{h_i}{\widehat{\bar h}_{i,k}} -1\right|, \ \widehat\eta_{i,k}^\ell=\hspace{-1pt}\max_{j\in\mathcal N_i\cup \{i\}}\hspace{-1pt}\left\{
    \widehat\eta_{j,k-1}^{\ell-1}\right\} \forall\,\ell=1,2,\ldots,m.
    \end{equation}
    
\cref{alg:distributed_certified_stepsize} summarizes the proposed DGT with Certified Adaptive Stepsizes (CA-DGT).
Equipped with the preliminary lemmas, we establish the almost sure convergence of CA-DGT.

\begin{algorithm}[t]
\caption{\textbf{C}ertified \textbf{A}daptive \textbf{DGT} (CA-DGT).}
\label{alg:distributed_certified_stepsize}
\begin{algorithmic}[1]
\State \textbf{Input:} Upper bound $m\in\mathbb N$ on the network diameter.
\State \textbf{Initialize:}
$\widehat{\bar h}_{i,0}=h_i$,
$\nu_{i,0}\stackrel{\rm i.i.d.}{\sim}\mathcal D_\nu$,
$\widehat{\sigma}_{i,-1}^{\ell}=0$,
$\widehat{\eta}_{i,-1}^{\ell}=0$ for each
$i\in\mathcal V$ and $\ell=0,1,\ldots,m$, and
$\varepsilon_\nu>0$.
\State Each agent $i\in\mathcal V$ repeats the following steps from $k=0$:
\Statex \qquad
$\displaystyle
\widehat{\bar h}_{i,k+1}
\leftarrow
\sum\nolimits_{j\in\mathcal N_i\cup\{i\}}
w_{ij}\widehat{\bar h}_{j,k}$.
\Statex \qquad
$\displaystyle
\nu_{i,k+1}
\leftarrow
\sum\nolimits_{j\in\mathcal N_i\cup\{i\}}
w_{ij}\nu_{j,k}$.
\Statex \qquad
\textbf{If}
$k\geq2$ \textbf{and}
$|\nu_{i,k-1}-\nu_{i,k-2}|\geq\varepsilon_\nu$
\textbf{then}
\Statex \qquad\qquad
$\displaystyle
\widehat{\sigma}_{i,k}^{0}
\leftarrow
\sqrt{
     {|\nu_{i,k+1}-\nu_{i,k}|}/
     {|\nu_{i,k-1}-\nu_{i,k-2}|}
}$.
\Statex \qquad
\textbf{else}
\Statex \qquad\qquad
$\widehat{\sigma}_{i,k}^{0}
\leftarrow
\widehat{\sigma}_{i,k-1}^{0}$.
\Statex \qquad
$\displaystyle
\widehat{\sigma}_{i,k}^{\ell}
\leftarrow
\max_{j\in\mathcal N_i\cup\{i\}}
\left\{
\widehat{\sigma}_{j,k-1}^{\ell-1}
\right\},
\quad
\ell=1,2,\ldots,m$.
\Statex \qquad
$\displaystyle
\widehat{\eta}_{i,k}^{0}
\leftarrow
|{h_i}/{\widehat{\bar h}_{i,k}}-1|$.
\Statex \qquad
$\displaystyle
\widehat{\eta}_{i,k}^{\ell}
\leftarrow
\max_{j\in\mathcal N_i\cup\{i\}}
\left\{
\widehat{\eta}_{j,k-1}^{\ell-1}
\right\},
\quad
\ell=1,2,\ldots,m$.
\Statex \qquad
\textbf{If} $\widehat{\sigma}_{i,k}^{m}<1$ \textbf{then}
\Statex \qquad\qquad
$\displaystyle
\beta_{i,k}
\leftarrow
\widehat{\beta}^{\star}
\bigl(
\widehat{\sigma}_{i,k}^{m},
\widehat{\eta}_{i,k}^{m}
\bigr)$
by \eqref{eq:r_cert} and \eqref{eq:cert_beta_arg_min}.
\Statex \qquad
\textbf{else}
\Statex \qquad\qquad
$\beta_{i,k}\leftarrow0$.
\Statex \qquad
Perform DGT in \eqref{eq:dgt_scalar} with stepsize
$\displaystyle
\alpha_{i,k}:=
\beta_{i,k}/\widehat{\bar h}_{i,k}$.
\end{algorithmic}
\end{algorithm}

\begin{theorem}[Almost sure convergence of CA-DGT]
\label{thm:adaptive_certified_stepsize_convergence}
Let \cref{ass:graph_connected,ass:cost_function} hold. For fixed $\sigma\in[0,1)$ and $\eta\ge0$, the stepsizes generated by ~\cref{alg:distributed_certified_stepsize} satisfy $\beta_{i,k}\to\widehat \beta^\star(\sigma,\eta)$ a.s. for all $i\in\mathcal V$. Consequently, the CA-DGT algorithm has $\lim_{k\rightarrow \infty}x_{i,k}=x^\star$ a.s. for all $i\in\mathcal V$.
\end{theorem}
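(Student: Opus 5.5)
The proof has two parts. First we show the parameter estimates converge, which makes the stepsizes converge by continuity. Then we show that DGT with asymptotically constant stabilizing stepsizes reaches the optimizer.

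\emph{Step 1: convergence of the estimates.} Under \cref{ass:graph_connected}, $\widehat{\bar h}_{i,k}\to\bar h$ exponentially fast, since $W^k\to\mathbf 1\mathbf 1^\top/N$. Hence $\widehat\eta^0_{i,k}\to|d_i-1|$ for every $i$. For $\sigma$, we expand
\[
\nu_{k+1}-\nu_k=\sum_{j\ge2}\lambda_j^k(\lambda_j-1)u_ju_j^\top\nu_0 .
\]
Because $\nu_0$ has an absolutely continuous law, the event $\{u_j^\top\nu_0=0\}$ has probability zero for each $j$. So a.s.\ every dominant mode ($|\lambda_j|=\sigma$) is active. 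The vector $\sum_{|\lambda_j|=\sigma}p_ju_j$ is then nonzero, so some node $i^\ast$ has a nonvanishing dominant component. The two-step ratio removes the parity factor $(-1)^k$, which gives $\widehat\sigma^0_{i^\ast,k}\to\sigma$.

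At every other node $i$ the local sequence behaves as $\Theta(\mu_i^k)$ with $\mu_i\le\sigma$, so $\limsup_k\widehat\sigma^0_{i,k}\le\sigma$. Two details need care here. The threshold $\varepsilon_\nu$ freezes the ratio once the differences become small; one must check that the freezing rule still yields a limit, or argue as in \cite{Deplano_TAC_Dyn_max_consensus} with the signals having limits. Nodes where all modes vanish identically give frozen values, which must also be bounded by $\sigma$. Given limits of the local signals, the dynamic max-consensus protocol with $m\ge$ diameter yields $\widehat\sigma^m_{i,k}\to\max_j\lim_k\widehat\sigma^0_{j,k}=\sigma$. The same argument gives $\widehat\eta^m_{i,k}\to\max_i|d_i-1|=\eta$.

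\emph{Step 2: convergence of the stepsizes.} Since $\sigma<1$, eventually $\widehat\sigma^m_{i,k}<1$, so $\beta_{i,k}=\widehat\beta^\star(\widehat\sigma^m_{i,k},\widehat\eta^m_{i,k})$. By the continuity in \cref{lem:cert_stepsize_unique_continuous}, $\beta_{i,k}\to\widehat\beta^\star(\sigma,\eta)$ a.s. Combined with $\widehat{\bar h}_{i,k}\to\bar h$, this gives $\alpha_{i,k}\to\widehat\alpha^\star$.

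\emph{Step 3: convergence of DGT.} System~\eqref{eq:compact_dgt} becomes a linear time-varying system $z_{k+1}=M_kz_k$ with $M_k\to M_\infty$. Here $M_\infty$ is the constant-stepsize matrix with $\beta=\widehat\beta^\star$. By \cref{lem:cert_stepsize_unique_continuous} and \cref{lem:stability_certificate}, $\esr(A(\widehat\beta^\star))\le\widehat\esr^\star_{\rm DGT}<1$. I would first exhibit the invariants that persist with time-varying uncoordinated stepsizes. Mean preservation $\mathbf 1^\top y_{k+1}=\mathbf 1^\top Hx_{k+1}$ holds for the unscaled $y$ regardless of $\alpha$. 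I would therefore work in $(x,y)$ coordinates, where only the $x$-update depends on $\alpha_k$, rather than the scaled $\tilde y$. Subtracting the fixed point $(\mathbf 1x^\star,H\mathbf 1x^\star)$ is valid for every stepsize, since at that point $y$ is consensual and $\sum_i h_i(x^\star-b_i)=0$. The error then obeys $e_{k+1}=M_ke_k$, where $M_k$ restricted to the invariant subspace has limit $M_\infty$ with spectral radius below one.

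\emph{Main obstacle.} The hardest step is turning "eventually close to a Schur matrix" into convergence. For this I would use a standard robustness argument. Take a norm in which $\|M_\infty\|<1$, for instance an induced norm with $\|M_\infty\|\le\rho+\epsilon<1$. Then $\|M_k\|<1$ for all $k\ge K$. The finitely many earlier steps produce a bounded state, so $e_k\to0$ exponentially. This gives $x_{i,k}\to x^\star$ a.s. The subtle point is the invariant subspace: the unit mode must be handled consistently when the stepsizes vary. Working in $(x,y)$ coordinates avoids the $\bm\alpha W\bm\alpha^{-1}$ term of \eqref{eq:compact_dgt}.
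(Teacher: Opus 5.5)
Your argument follows the paper's proof: convergence of $\widehat{\bar h}_{i,k}$ and of the ratio estimator, dynamic max-consensus, continuity of $\widehat\beta^\star$ from \cref{lem:cert_stepsize_unique_continuous}, and robustness of a time-varying linear system whose matrices converge to a Schur matrix. Your induced norm does the job of the paper's Lyapunov matrix $P_{\rm L}$; state the bound uniformly, $\|M_k\|\le q<1$ for $k\ge K$, not just $\|M_k\|<1$. Working in unscaled $(x,y)$ coordinates is more careful than the paper's $A(\bm\beta_k)$. That notation hides the factor $\bm\alpha_{k+1}W\bm\alpha_k^{-1}$ which the scaled variables produce when stepsizes vary, and it is undefined while some $\beta_{i,k}=0$.

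One step is wrong as written, though easy to repair. $(\mathbf 1x^\star,H\mathbf 1x^\star)$ is not an equilibrium. When the $h_i$ differ and $x^\star\neq0$, $H\mathbf 1x^\star$ is not consensual, and the $x$-update sends it to $\mathbf 1x^\star-x^\star W\bm\alpha H\mathbf 1\neq\mathbf 1x^\star$. So $e_{k+1}=M_ke_k$ fails.

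The correct equilibrium, valid for every stepsize matrix including zero entries, is $(\mathbf 1x^\star,\mathbf 0)$. It is the right one under the gradient initialization $y_{i,0}=h_i(x_{i,0}-b_i)$, which gives the invariant $\mathbf 1^\top y_k=\sum_i h_i(x_{i,k}-b_i)$. The invariant you quote, $\mathbf 1^\top y_k=\mathbf 1^\top Hx_k$, comes from the literal $y_{i,0}=h_ix_{i,0}$ and would drive $x_k$ to $0$ instead.

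With this fix, $[-\mathbf 1^\top H,\ \mathbf 1^\top]$ is a left eigenvector of every $M_k$ with eigenvalue $1$. Its kernel is therefore an $\bm\alpha$-independent invariant subspace containing $e_0$. On that subspace the spectrum of $M_\infty$ is exactly that of $A(\widehat\beta^\star)$, and your Step 3 goes through.

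Your worry about $\varepsilon_\nu$ is justified, and the paper does not settle it either. Since $|\nu_{i,k-1}-\nu_{i,k-2}|=O(\sigma^k)$, with fixed $\varepsilon_\nu>0$ every node eventually stops updating. $\widehat\sigma^0_{i,k}$ then freezes at a finite-horizon ratio that generally differs from $\sigma$, so the limit is not $\sigma$.

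Both your Step 1 and the paper's proof really concern the un-thresholded ratio \eqref{eq:sigma_local_ratio}, or a safeguard that skips only exactly zero denominators. Such zeros a.s. occur only at nodes whose observable nontrivial modes all have eigenvalue $0$, where the frozen value is $0\le\sigma$. You should state this assumption explicitly.
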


\begin{IEEEproof}
As discussed after~\eqref{eq:sigma_estimate}, 
$\widehat{\sigma}_{i,k}^{0}$ converges a.s. to limit whose
network maximum is $\sigma$. Moreover, \cref{ass:graph_connected}
and~\eqref{eq:ave_curv} yield
$\widehat{\bar h}_{i,k}\to\bar h$, and hence
$\widehat{\eta}_{i,k}^{0}
=|h_i/\widehat{\bar h}_{i,k}-1|
\to |h_i/\bar h-1|$, whose network maximum is $\eta$.
Applying \cite[Corollary~3]{Deplano_TAC_Dyn_max_consensus} gives
$\widehat{\sigma}_{i,k}^{m}\to\sigma$ a.s. and
$\widehat{\eta}_{i,k}^{m}\to\eta$ for all
$i\in\mathcal V$.

By the continuity of the certified stepsize rule from \cref{lem:cert_stepsize_unique_continuous}, it follows $\beta_{i,k}=\widehat\beta^\star(\hat\sigma_{i,k}^m,\hat\eta_{i,k}^m)\to\widehat\beta^\star(\sigma,\eta)$ $\forall i\in\mathcal V$ a.s. Equivalently, one has $\bm\beta_k:=\operatorname{diag}(\beta_{1,k},\ldots,\beta_{N,k})\to\widehat\beta^\star(\sigma,\eta)I$ a.s. Since the entries of $A(\bm\beta_k)$ are continuous in $\bm\beta_k$, with a slight abuse of notation, we have $A(\bm \beta_k)\to A(\widehat\beta^\star(\sigma,\eta))$ a.s. By \cref{thm:N_node_certificate} and the optimality of $\widehat\beta^\star(\sigma,\eta)$ in \eqref{eq:cert_beta_arg_min}, $\widehat\esr_{\rm DGT}^\star(\sigma,\eta)<1$.
Hence, \cref{lem:stability_certificate} implies that
$A(\widehat\beta^\star(\sigma,\eta))$ is Schur. For any $Q_{\rm L}\succ0$, there exists $P_{\rm L}\succ0$ such that $A(\widehat\beta^\star(\sigma,\eta))^\top P_{\rm L} A(\widehat\beta^\star(\sigma,\eta))-P_{\rm L}=-Q_{\rm L}$. Since $Q_{\rm L}\succ0$ and $P_{\rm L}\succ0$, there exists $c\in(0,1)$ such that $P_{\rm L}-Q_{\rm L}\preceq cP_{\rm L}$, and thus $A(\widehat\beta^\star(\sigma,\eta))^\top P_{\rm L} A(\widehat\beta^\star(\sigma,\eta))\preceq cP_{\rm L}$. By the continuity of $A\mapsto A^\top P_{\rm L} A$, for almost every sample path there exist a finite $K\in\mathbb N$ and a constant $\bar c\in(c,1)$ such that $A(\bm \beta_k)^\top P_{\rm L} A(\bm \beta_k)\preceq \bar cP_{\rm L}$ for all $k\ge K$. Therefore, with $V_k:=e_k^\top P_{\rm L} e_k$ for the reduced error state $e_k:=\operatorname{col}(\widehat y_{c,k},\widehat x_{d,k},\widehat y_{d,k})$, one has $V_{k+1}\le\bar c V_k$ for all $k\ge K$. It follows that $e_k$ converges exponentially to the origin. Consequently, $x_k\to \mathbf 1x^\star$ a.s., namely, $\lim_{k\to\infty}x_{i,k}=x^\star$ a.s. for all $i\in\mathcal V$.
\end{IEEEproof}

\cref{alg:distributed_certified_stepsize} implements an adaptive, distributed computation of the certified stepsize that solves~\eqref{eq:cert_beta_arg_min}. Each agent estimates global parameters $\sigma$ and $\eta$ locally, and \cref{thm:adaptive_certified_stepsize_convergence} ensures that each uncoordinated stepsize $\alpha_i$ converges a.s. to the centralized certified stepsize $\widehat\alpha^\star(\sigma,\eta)$.

\section{Simulations}
In this section, we numerically validate \cref{alg:distributed_certified_stepsize} and estimate how conservative the proposed stepsize is.

First, we consider $5$ agents on a ring graph with Metropolis weights and curvatures $\{h_i\}_{i\in\mathcal V}=\{4,7,8,9,10\}$. ${b_i}$ and ${c_i}$ are independently drawn from $\mathcal U(0,10)$. \cref{fig:hatbar_h,fig:hat_eta} report the local estimates of $\bar h$, $\sigma$, and $\eta$, respectively. For stepsize selection, each agent queries a precomputed lookup table of $\widehat\beta^\star(\widehat\sigma_{i,k}^m,\widehat\eta_{i,k}^{m})$ over $\widehat\sigma_{i,k}^m\in[0,1)$ and $\widehat\eta_{i,k}^{m}\in[0,\bar\eta]$, with $\bar\eta$ chosen sufficiently large. The uncoordinated stepsizes $\alpha_{i,k}$ are shown in \cref{fig:beta}. All estimates of the three global parameters rapidly converge to the respective true values and the stepsizes $\alpha_{i,k}$ settle to $\widehat\alpha^\star(\sigma,\eta)$ after approximately $k=10$ iterations.
\cref{fig:x_error_compare} compares the optimization error of CA-DGT with (i) the constant stepsize designed by~\cite[Theorem~1]{pu2021_Mathprogram_DGT} (label ``DGT'') and (ii) the local stepsizes updated according to~\cite[Theorem~1]{Adgt_arXiv_2025} (label ``AdGT'').
CA-DGT converges faster, consistently with the less conservative design in~\eqref{eq:cert_beta_arg_min} that approximates the optimal constant stepsize.

\begin{figure}[t]
    \centering

    \makebox[\columnwidth][c]{%
    \subfloat[]
    {
        \includegraphics[width=0.53\columnwidth,trim=8 0 8 3,clip]{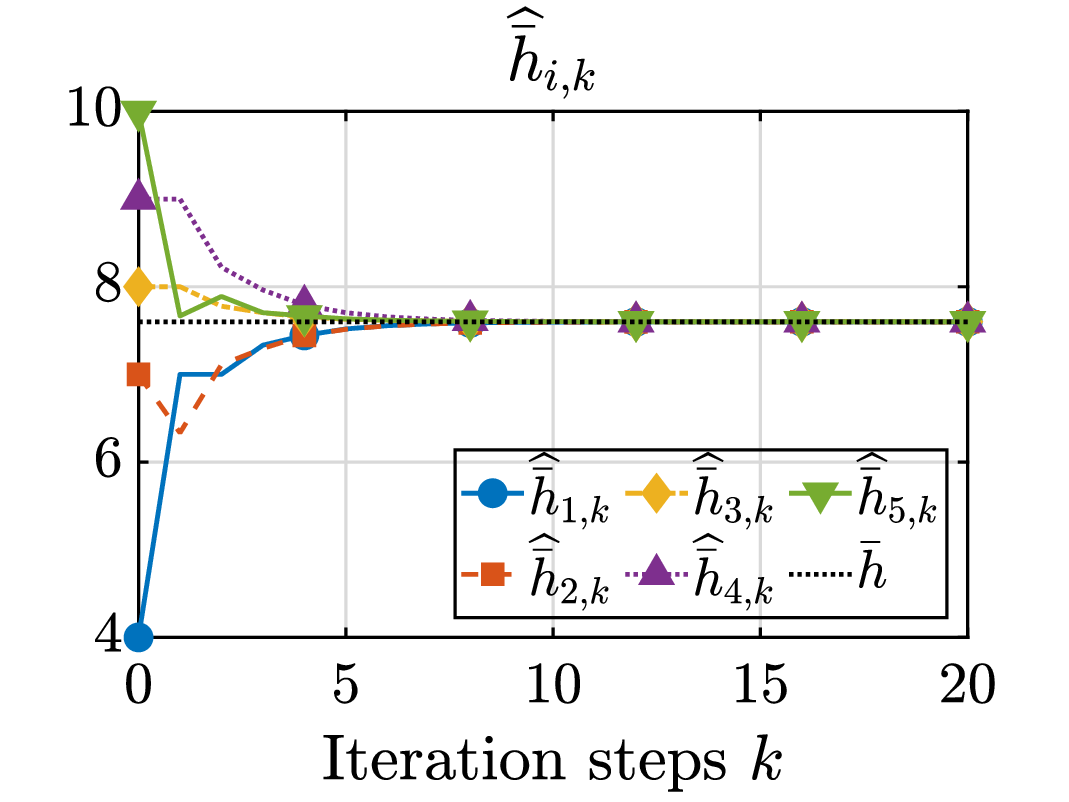}
        \label{fig:hatbar_h}
    }%
    \hspace{-0.055\columnwidth}%
    \subfloat[]
    {
        \includegraphics[width=0.53\columnwidth,trim=8 0 8 3,clip]{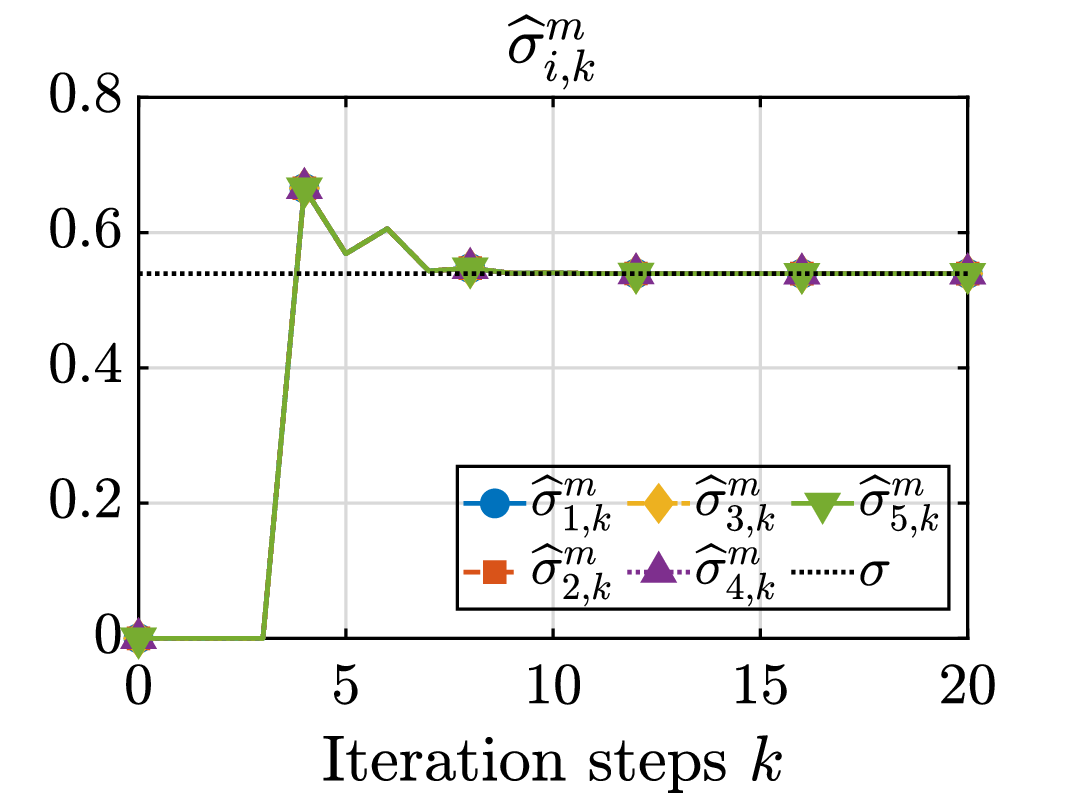}
        \label{fig:hat_sigma}
    }%
    }\\[-0.7em]

    \makebox[\columnwidth][c]{%
    \subfloat[]
    {
        \includegraphics[width=0.53\columnwidth,trim=8 0 8 3,clip]{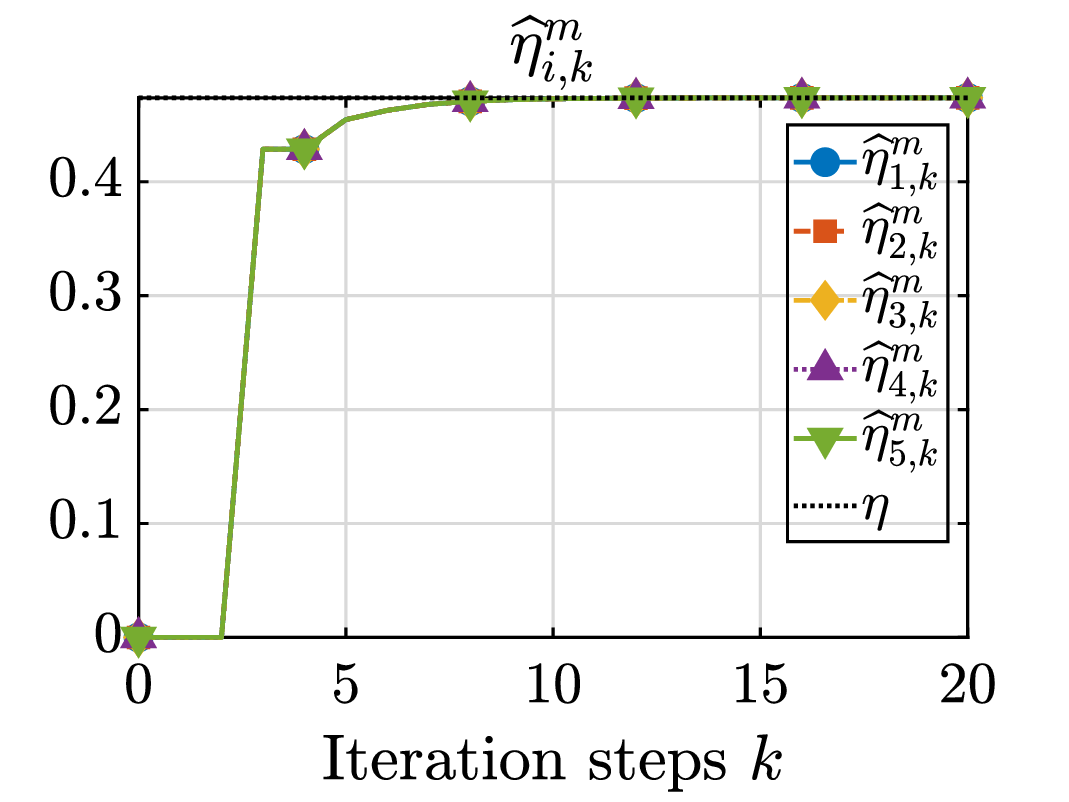}
        \label{fig:hat_eta}
    }%
    \hspace{-0.055\columnwidth}%
    \subfloat[]
    {
        \includegraphics[width=0.53\columnwidth,trim=8 0 8 3,clip]{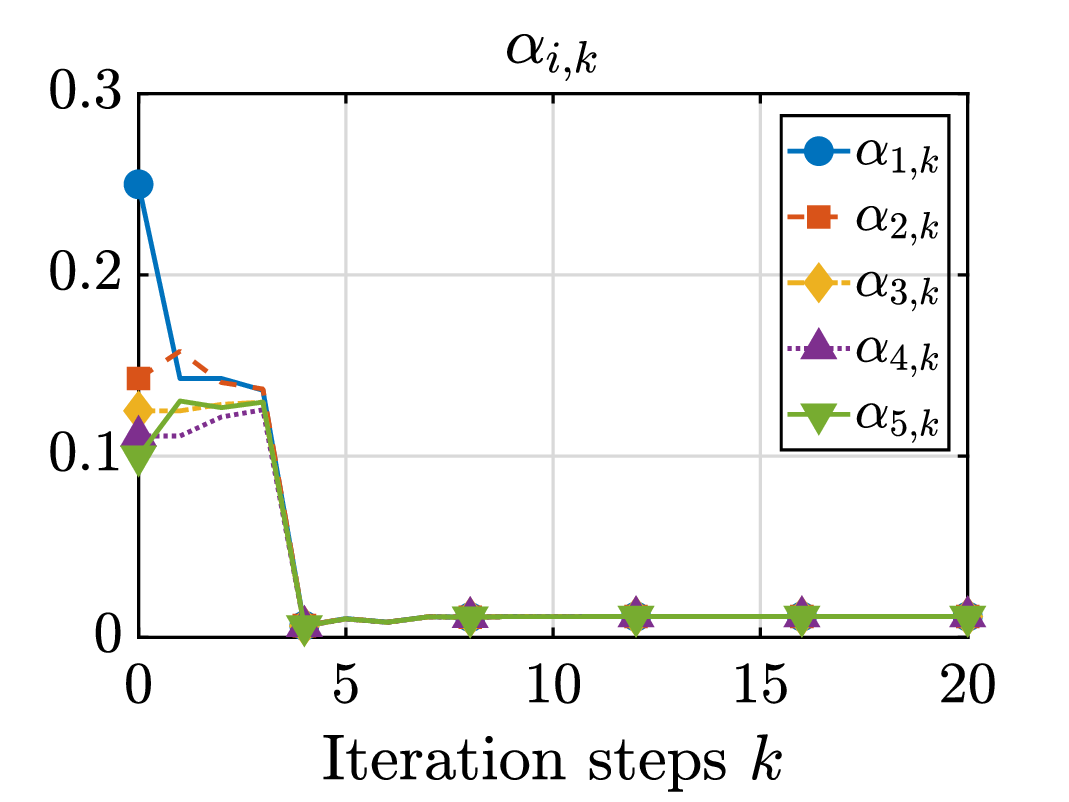}
        \label{fig:beta}
    }%
    }

    \vspace{-0.5em}
    \caption{Evolutions of the distributed estimates and adaptive stepsize in the proposed \cref{alg:distributed_certified_stepsize}. 
    (a) Local estimation of $\bar h$. 
    (b) Local estimation of $\sigma$. 
    (c) Local estimation of $\eta$. 
    (d) Adaptive stepsize $\alpha_{i,k}$.}
    \label{fig:adaptive_dgt_results}
\end{figure}

\begin{figure}[t]
    \centering
    \includegraphics[width=0.41\textwidth]{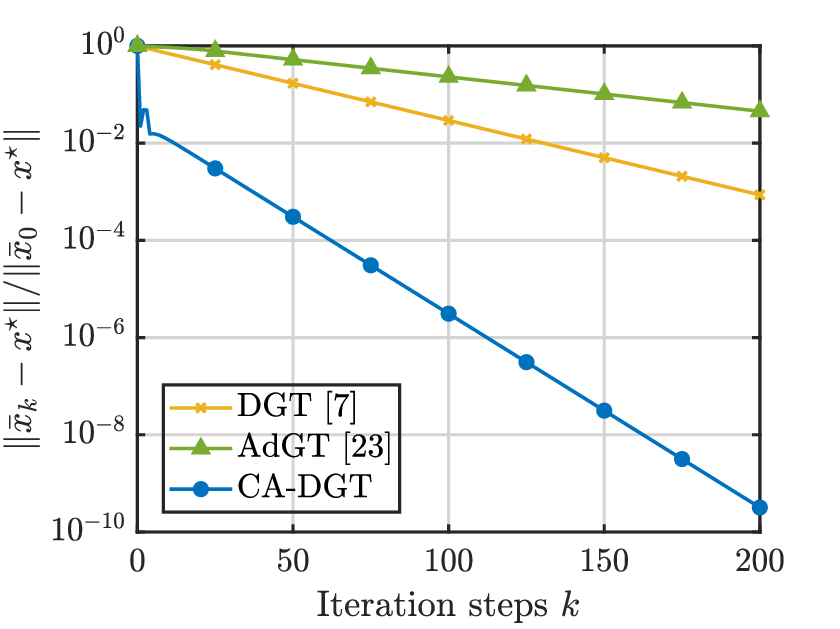}
    \vspace{-5pt}
    \caption{Comparison of the average normalized state error $|\bar x_k-x^\star|/|\bar x_0-x^\star|$ for DGT \cite{pu2021_Mathprogram_DGT}, AdGT \cite{Adgt_arXiv_2025} and CA-DGT where $\bar x_k=1/N \sum_{i=1}^Nx_{i,k}$.}
    \label{fig:x_error_compare}
    \vspace{-3pt}
\end{figure}

\begin{figure}[t]
\centering
\includegraphics[width=0.41\textwidth,trim={0 0pt 0 0pt},clip]{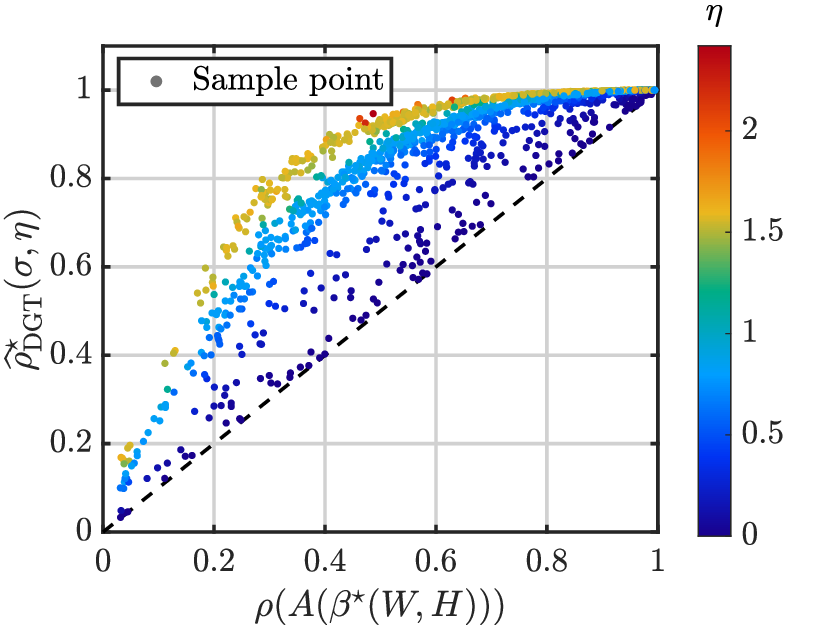} 
\vspace{-5pt}
\caption{Comparison between the minimal certified radius $\widehat\esr_{\rm DGT}^\star(\sigma,\eta)$ and the minimal convergence factor $\esr(A(\beta^\star(W,H)))$ for different  random graphs $W$ and curvatures $H$.}
\label{fig:rho_compare}
    \vspace{-3pt}
\end{figure}

\begin{figure}[t]
    \centering
    \includegraphics[width=0.41\textwidth]{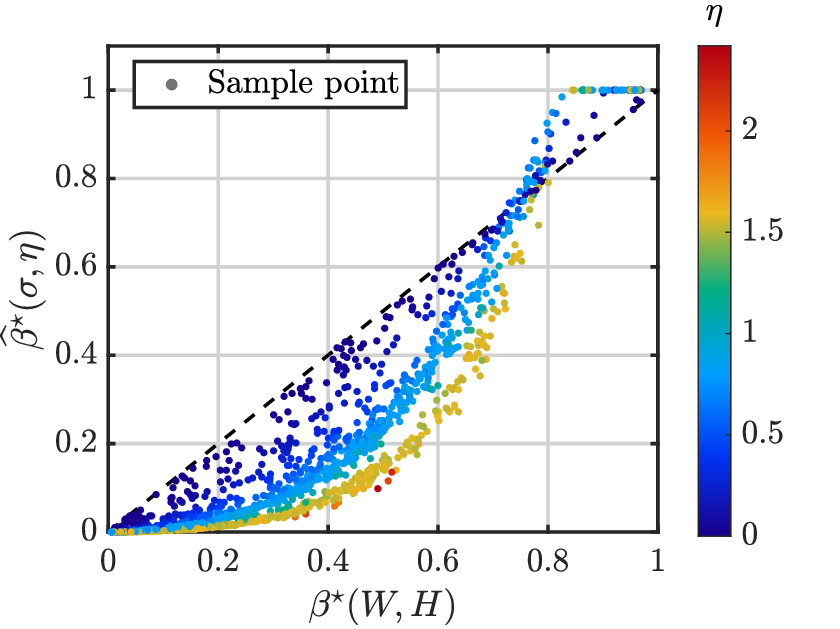}
    \vspace{-4pt}
    \caption{Comparison between the certified stepsize $\widehat\beta^\star(\sigma,\eta)$ and the optimal stepsize $\beta^\star(W,H)$ for different  random graphs $W$ and curvatures $H$.}
    \label{fig:beta_compare}
    \vspace{-3pt}
\end{figure}

The next experiment evaluates how conservative the certified stepsize $\widehat\beta^\star(\sigma,\eta)$ is compared to the optimal stepsize $\beta^\star(W,H)\in\arg\min_{\beta>0}\esr(A(\beta))$ for a given reduced DGT matrix $A(\beta)$. Since the latter problem is nonconvex, we numerically compute $\beta^\star(W,H)$ via a grid search over the same admissible range $0<\beta<2$.
We consider $N=2000$ agents and generate $2000$ random problem instances.
Each instance comprises one random network and local curvatures $\{h_i\}_{i\in\mathcal V}$ randomly generated within $[1,10]$.
\cref{fig:rho_compare} compares the minimal certified radius $\widehat\esr_{\rm DGT}^\star(\sigma,\eta)$ with the minimal convergence factor $\esr(A(\beta^\star(W,H)))=\esr(\alpha^\star;W,H)$.
Each point corresponds to one random problem instance $W,H$.
Although the points lie above the diagonal (recall that $\widehat\esr_{\rm DGT}(\beta,\sigma,\eta)$ upper bounds $\esr(A(\beta))$), many samples are relatively close to the diagonal for small $\eta$.
The same behavior is evident in \cref{fig:beta_compare} which compares the certified stepsize with numerically computed $\beta^\star(W,H)$.
This indicates that the certificate is reasonably tight under nearly uniform curvatures. Note that a cluster of sample points appears near $\widehat\beta^\star(\sigma,\eta)=1$.
These cases correspond to fast-mixing dense graphs (recall that $\beta^\star(W,H)=1$ for $\sigma=0$, see \cref{sec:complete_graph}), for which the certified bound $\widehat\beta^\star(\sigma,\eta)$ favors the largest admissible stepsize.

\section{Conclusion}\label{sec:conclusion}
We propose a principled stepsize design for fast convergence of DGT under scalar quadratic costs.
Our convergence analysis only uses curvature dispersion $\eta$ and essential spectral radius of the consensus matrix $\sigma$, in contrast to global Lipschitz constant, enabling analytical insight on how those quantities affect the convergence of DGT.
Building on the analysis, we propose a computationally tractable constant stepsize optimization which approximately minimizes the convergence factor of the DGT system matrix using only $\sigma$ and $\eta$. Finally, we develop a distributed implementation of adaptive uncoordinated stepsizes which converge to the optimized certified stepsize through local estimation of the two global parameters. 

Extensions beyond scalar quadratics are nontrivial even in a centralized framework \cite{adaptive_ifac}. For multi-dimensional quadratics, the present scalar analysis can be applied modewise when the local Hessians share common eigendirections. For general local Hessians, extending \cref{lem:stability_certificate} would naturally depend on perturbation analysis on matrix blocks, which may lead to a more conservative certified stepsize compared to that of the scalar quadratic case.
For smooth strongly convex objectives, Hessians vary along the iterations, leading to a time-varying robust analysis, which is more challenging. These extensions will be pursued in future work.

\appendices

\section{Proof of \texorpdfstring{\cref{lem:cert_stepsize_unique_continuous}}{Lemma~4}}\label{appendix:A}

We restrict the analysis to the range $\beta\in(0,1]$, because for every $\beta\in(0,1]$, its reflected stepsize $\beta'=2-\beta\in[1,2)$ yields no smaller certified radius: $\widehat\esr_{\rm DGT}(\beta,\sigma,\eta)\le \widehat\esr_{\rm DGT}(\beta',\sigma,\eta)$.

\subsubsection*{Uniqueness} The boundary cases follow from \cref{sec:special-regimes}; we have $\widehat\beta^\star(0,\eta)=1,\,\forall\,\eta\ge0$ and $\widehat\beta^\star(\sigma,0)=1-\sqrt{\sigma}, \,\forall\,\sigma\in[0,1)$.
Consider the general case $0<\sigma<1$, $\eta>0$. For given $r\in(0,1)$, define the set of stepsizes certified at radius $r$ as
$
    \mathcal B_r := \left\{ \beta\in(0,1]: \ R_0(\beta,\sigma)<r,\ \Phi(r,\beta,\sigma,\eta)\le1 \right\}$.
Let $\xi:=\beta-(1-r)\in(0,r]$. On this domain, together with the condition $R_T(1-r+\xi,\sigma)<r$, the inequality $\Phi(r,\beta,\sigma,\eta)\le1$ is equivalent to $\Theta_r(\xi)\le r$
where $\Theta_r(\xi) := R_T(1-r+\xi,\sigma) + \Theta_r^{\rm het}(\xi) \sqrt{\sigma(1+r) }$ with $\Theta_r^{\rm het}(\xi):=\sqrt{\eta(1-r+\xi) + \eta^2\frac{(1-r+\xi)^2}{\xi}}$. Then $\mathcal B_r$ is mapped to $\{\xi\in(0,r]:\Theta_r(\xi)\le r\}$.
Direct calculations show that $\Theta_r^{\rm het}(\xi)$ decreases in $(0,\xi_N)$ and increases in $(\xi_N,\infty)$ where $\xi_N=(1-r)\sqrt{\frac{\eta}{1+\eta}}$. On $(0,\min\{\xi_N,r\}]$, the positive curvature of $\Theta_r^{\rm het}(\xi)\sqrt{\sigma(1+r)}$ dominates the curvature of each branch of $R_T$. For $\xi\ge\xi_N$, both $\Theta_r^{\rm het}(\xi)$ and $R_T(1-r+\xi,\sigma)$ are non-decreasing. Thus $\Theta_r$ has a single-valley structure in $(0,r]$ and admits a unique global minimizer.
Define $ \chi(r):=\min_{\xi\in(0,r]}\Theta_r(\xi)$. Then $\mathcal B_r\neq\emptyset$ is equivalent to $\chi(r)\le r$.
By \cref{thm:N_node_certificate}, there exist $\bar{\beta}\in(0,1]$ and $\bar{r}\in(R_0(\bar\beta,\sigma),1)$ such that $\mathcal B_{\bar{r}}\neq\emptyset$ and $\chi(\bar r)\le \bar r$. 

For brevity, let $r^\star:=\inf\{r\in(0,1):\mathcal B_r\neq\emptyset\}$. Since $r^\star<1$ and $\eta>0$, $\Theta_r(\xi)\to\infty$ as $\xi\downarrow0$, uniformly for $r$ near $r^\star$. Hence the minimization defining $\chi(r):=\min_{\xi\in(0,r]}\Theta_r(\xi)$ can be restricted locally to a common compact subset of $(0,r]$, and Berge's maximum theorem gives the continuity of $\chi$ at $r^\star$. By the definition of $r^\star$, there exists $r_n\downarrow r^\star$ such that $\mathcal B_{r_n}\neq\emptyset$, equivalently $\chi(r_n)\le r_n$. Passing to the limit gives $\chi(r^\star)\le r^\star$. If the inequality was strict, continuity would give some $r<r^\star$ with $\chi(r)<r$, contradicting the definition of $r^\star$. Thus $\chi(r^\star)=r^\star$.
Let $\xi_{r^\star}^\star$ be the minimizer of $\Theta_{r^\star}$ in $(0,r^\star]$. The single-valley structure gives $\{\xi\in(0,r^\star]:\Theta_{r^\star}(\xi)\le r^\star\}=\{\xi_{r^\star}^\star\}$. Recalling that $\xi=\beta-(1-r)$, this implies $\mathcal B_{r^\star}=\{1-r^\star+\xi_{r^\star}^\star\}$. Hence every minimizer of $\widehat\esr_{\rm DGT}(\beta,\sigma,\eta)$ coincides with $1-r^\star+\xi_{r^\star}^\star$, proving uniqueness.

\subsubsection*{Continuity}
Consider $(\sigma,\eta)\to(0,\bar\eta)$ with $\bar\eta\ge0$. Take $\beta=1$ and choose $r_\sigma=\sigma^a$ with $a\in(0,1/3)$. Then, for small $\sigma>0$, one has $r_\sigma>R_0(1,\sigma)$ and $\Phi(r_\sigma,1,\sigma,\eta)\to0$. Hence $\widehat\esr^\star_{\rm DGT}(\sigma,\eta)\le r_\sigma\to0$. Since every feasible radius satisfies $r>R_0(\beta,\sigma)\ge1-\beta$, the minimizer satisfies $0\le1-\widehat\beta^\star(\sigma,\eta)\le\widehat\esr^\star_{\rm DGT}(\sigma,\eta)$. Therefore, $\widehat\beta^\star(\sigma,\eta)\to1=\widehat\beta^\star(0,\bar\eta)$.

It remains to prove when $\bar\sigma\in(0,1)$. We use the following fact: after expanding
$\widehat\esr_{\rm DGT}(0,\sigma,\eta)=1$, $(\beta,\sigma,\eta)\mapsto\widehat\esr_{\rm DGT}(\beta,\sigma,\eta)$ is jointly continuous
on $[0,1]\times(0,1)\times[0,\infty)$. This follows from the joint continuity of $\Phi$ and $R_0$, the strict monotonicity of $\Phi$ in $r$, and the convention
$\widehat\esr_{\rm DGT}(\beta,\sigma,\eta)=1$ at the empty-set boundary. For $\eta=0$, $\Phi(r,\beta,\sigma,0)\equiv0$ gives $\widehat\esr_{\rm DGT}(\beta,\sigma,0)=R_0(\beta,\sigma)$, while endpoint continuity follows from $R_0(\beta,\sigma)\le \widehat\esr_{\rm DGT}(\beta,\sigma,\eta)\le R_0(\beta,\sigma)+\varepsilon$ for all sufficiently small $\eta>0$.
Let $(\sigma_n,\eta_n)\to(\bar\sigma,\bar\eta)$ with $\bar\sigma\in(0,1)$ and $\bar\eta\ge0$, and set $\beta_n^\star:=\widehat\beta^\star(\sigma_n,\eta_n)$. Any subsequence admits a further one, still denoted by $\beta_n^\star$, with $\beta_n^\star\to\bar\beta$. By optimality, $\widehat\esr_{\rm DGT}(\beta_n^\star,\sigma_n,\eta_n)\le\widehat\esr_{\rm DGT}(\beta,\sigma_n,\eta_n), \forall\,\beta\in[0,1]$. By joint continuity, it has $\widehat\esr_{\rm DGT}(\bar\beta,\bar\sigma,\bar\eta)\le\widehat\esr_{\rm DGT}(\beta,\bar\sigma,\bar\eta), \forall\,\beta\in[0,1]$. Noting uniqueness gives $\bar\beta=\widehat\beta^\star(\bar\sigma,\bar\eta)$, hence every convergent subsequence has the same limit, proving $\widehat\beta^\star(\sigma_n,\eta_n)\to\widehat\beta^\star(\bar\sigma,\bar\eta)$.
Together with the boundary case at $\sigma=0$, $\widehat\beta^\star(\sigma,\eta)$ is continuous on $[0,1)\times[0,\infty)$.


\arxiv{}{%
\section{Proof of Proposition~\ref{prop:weak_heterogeneity_expansion}}\label{appendix:B}
\begin{IEEEproof}
      By Lemma~\ref{lem:homogeneous_envelope}, the optimal normalized stepsize minimizing the homogeneous root radius, i.e., the case of $\eta=0$, is $\beta_0=1-r_0$. At this point, the active branch in $R_T(\beta_0,\sigma)$ is $R_+(\beta_0,\sigma)=r_0$.
      
      Recall that $P_\lambda(z;\beta)=(z-\lambda)^2+\beta\lambda(z-1)$, and $R_+(\beta,\sigma)$ is implicitly defined by $P_\sigma(R_+(\beta;\sigma);\beta)=0$. Hence differentiating w.r.t. $\beta$ gives $\frac{\partial P_\sigma}{\partial z}\frac{\partial R_+}{\partial \beta} +\frac{\partial P_\sigma}{\partial \beta}=0$.
      Evaluating this relation at $z=r_0$, $\sigma=r_0^2$ and $\beta=\beta_0=1-r_0$, we obtain
      \[
     \kappa:=
     \left.\frac{\partial R_+}{\partial\beta}\right|_{\beta=\beta_0}
     =
     \frac{r_0}{r_0+2}.
     \]
     Denote $\delta_\beta=\beta_\eta-\beta_0$ and $\delta_r=r_\eta-r_0$. Since $R_+(\cdot,\cdot)$ is the active branch of $R_T(\cdot,\cdot)$ locally around $\beta_0$, according to Taylor expansion, we have 
     $$R_T(\beta_\eta,\sigma)=r_0+\kappa\delta_\beta+O(\delta_\beta^2).$$
    
     Let's define two gaps $g_1:=r_\eta-(1-\beta_\eta)$ and
     $g_2:=r_\eta-R_T(\beta_\eta,\sigma)$ where $r_\eta:=r_0+\delta_r$.
     At leading order, one has $g_1=\delta_r+\delta_\beta$ and
     $g_2=\delta_r-\kappa\delta_\beta$ which imply
     \begin{equation}\label{eq:delta_r_beta}
         \delta_r=\frac{g_2+\kappa g_1}{1+\kappa},\quad
         \delta_\beta=\frac{g_1-g_2}{1+\kappa}, \quad
         r_\eta=r_0+\frac{g_2+\kappa g_1}{1+\kappa}.
     \end{equation}
     One can observe that minimizing $r_\eta$ is
     equivalent, at leading order, to minimizing $g_2+\kappa g_1$.
    
     At the optimum, the certificate condition is active, i.e.,
     $\Phi(r_\eta,\beta_\eta,\sigma,\eta)=1$.
     Using $g_1=r_\eta-1+\beta_\eta$ and
     $g_2=r_\eta-R_T(\beta_\eta,\sigma)$, this equality gives $g_2^2=
     C_\eta\left(\eta+\frac{\beta_\eta\eta^2}{g_1}\right)$ where
     $C_\eta:=\beta_\eta\sigma(1+r_\eta)$.
     As $\eta\to0$, one has $\beta_\eta\to\beta_0$ and $r_\eta\to r_0$ which  result in $C_\eta\to C_0:=\beta_0\sigma(1+r_0)$.
     Hence the leading-order reduced objective is $\Psi_\eta(g_1):=\sqrt{C_0}\left(
     \eta+\frac{\beta_0\eta^2}{g_1}\right)^{1/2}
     +\kappa g_1$.
     The minimizer satisfies $g_1\gg\eta$ and $g_1\ll\eta^{1/2}$, because the first-order condition $\kappa=\frac{\sqrt{C_0}\beta_0\eta^2}
     {2g_1^2\left(\eta+\frac{\beta_0\eta^2}{g_1}\right)^{1/2}}
     \bigl(1+o(1)\bigr)$ rules out both $g_1=O(\eta)$ and $g_1=\Omega(\sqrt\eta)$.
     Therefore, we obtain $\left(\eta+\frac{\beta_0\eta^2}{g_1}
     \right)^{1/2}=\sqrt\eta+\frac{\beta_0}{2}\frac{\eta^{3/2}}{g_1}+
     o\left(\frac{\eta^{3/2}}{g_1}\right)$.
     After removing the constant term $\sqrt{C_0\eta}$, the first
     non-constant part of the objective is $\frac{\sqrt{C_0}\beta_0}{2}\frac{\eta^{3/2}}{g_1}+\kappa g_1$. 
     Minimizing this expression gives $g_1=\left(\frac{\sqrt{C_0}\beta_0}{2\kappa}\right)^{1/2}\eta^{3/4}+o(\eta^{3/4})$.
     Consequently, we have $g_2=\sqrt{C_0 \eta}+o(\sqrt\eta)$. 
     Substituting these estimates into the linear relations for
     $\delta_r$ and $\delta_\beta$ as in \eqref{eq:delta_r_beta} gives
     \[
     \delta_r=\frac{\sqrt{C_0}}{1+\kappa}\sqrt\eta+o(\sqrt\eta),
     \qquad
     \delta_\beta=-\frac{\sqrt{C_0}}{1+\kappa}\sqrt\eta+o(\sqrt\eta).
     \]
     By relations $r_\eta=r_0+\delta_r$ and
     $\beta_\eta=\beta_0+\delta_\beta$, the claimed expansions in \eqref{eq:beta_eta0} and \eqref{eq:r_eta0} follow.
\end{IEEEproof}}

\arxiv{}{%
\section{Proof of Proposition~\ref{prop:strong_hete_expansion}}\label{appendix:C}
\begin{IEEEproof}
     By the definition of $r_\eta=r_{\rm cert}(\beta_\eta,\sigma,\eta)$ and the continuity of $\Phi$, the infimum satisfies the closed certificate condition $\Phi(r_\eta,\beta_\eta,\sigma,\eta)\le1$. Inspecting this certificate condition, it requires $\beta_\eta\to0$ as $\eta\to+\infty$. Together with $1-\beta_\eta<r_\eta<1$, it implies $r_\eta\to1$. Moreover, one has $R_T(\beta_\eta,\sigma)\to\sigma$ according to \eqref{eq:R_T}. Therefore, we have the limit:
     \[
     \frac{\sigma(1+r_\eta)}{(r_\eta-R_T(\beta_\eta,\sigma))^2}\to\frac{2\sigma}{(1-\sigma)^2}.
     \]
    
     Recall the two gaps $g_1:=r_\eta-1+\beta_\eta$ and 
     $g_2:=r_\eta-R_T(\beta_\eta,\sigma)$. 
     In the strong-heterogeneity regime, $g_2\to1-\sigma>0$, and thus it only contributes the finite limiting factor above to the certificate. Therefore, the dominant order is determined by $g_1$.
     We write $\beta_\eta=c_\beta\eta^{-p}+o(\eta^{-p})$ and
     $g_1=c_g\eta^{-q}+o(\eta^{-q})$ where $c_\beta>0$ and $c_g>0$. The condition $0<g_1<\beta_\eta$ gives $q\ge p$. Moreover, the certificate condition requires $\beta_\eta\eta=O(1)$ and 
     $\frac{\beta_\eta^2\eta^2}{g_1}=O(1)$. Hence $p\ge1$ and $q\le 2p-2$. Combining these relations gives $p\le q\le 2p-2$ which holds only if $p\ge2$.
    
     Since $r_\eta=1-\beta_\eta+g_1$, the stability margin is $1-r_\eta=\beta_\eta-g_1$. To maximize this margin at the leading order, we choose the smallest feasible exponent, namely $p=2$. Then $p\le q\le 2p-2$ forces
     $q=2$, and hence
     \begin{equation}\label{eq:beta_eta_g1_expans_proof}
         \beta_\eta=c_\beta\eta^{-2}+o(\eta^{-2}),\quad g_1=c_g\eta^{-2}+o(\eta^{-2}).
     \end{equation}
    
     Since \(0<g_1<\beta_\eta\) is required for \(r_\eta<1\), the
     leading coefficients satisfy \(0<c_g<c_\beta\). Let
     \(d:=c_\beta-c_g>0\). Using
     \(r_\eta=1-\beta_\eta+g_1\), we obtain
     \begin{equation}\label{eq:r_eta_expans_proof}
         r_\eta=1-d\eta^{-2}+o(\eta^{-2}).
     \end{equation}
    
     It remains to determine $c_\beta$ and $d$. Since $\beta_\eta\eta=o(1)$,
     $\beta_\eta^2\eta^2/g_1=c_\beta^2/c_g+o(1)$, and $\frac{\sigma(1+r_\eta)}
     {(r_\eta-R_T(\beta_\eta,\sigma))^2}\to\frac{2\sigma}{(1-\sigma)^2}$,
     the leading-order certificate condition requires $\frac{2\sigma}{(1-\sigma)^2}\frac{c_\beta^2}{c_g}\le1$, or equivalently $d\le c_\beta-\frac{2\sigma}{(1-\sigma)^2}c_\beta^2$.
    
     Therefore, maximizing the leading-order stability margin is
     equivalent to maximizing
     $c_\beta-2\sigma c_\beta^2/(1-\sigma)^2$ over $c_\beta>0$. The maximizer and the corresponding maximum are
     \begin{equation}\label{eq:c^*_d^*}
         c_\beta^\star=\frac{(1-\sigma)^2}{4\sigma},
         \qquad
         d^\star=\frac{(1-\sigma)^2}{8\sigma}.
     \end{equation}
     Plugging \eqref{eq:c^*_d^*} into \eqref{eq:beta_eta_g1_expans_proof} and \eqref{eq:r_eta_expans_proof} ends the proof.
 \end{IEEEproof}}

\end{document}